\newcommand{\rank}{\mathsf{rank}}
\newcommand{\card}{\mathsf{card}}
\renewcommand{\Re}{\mathsf{Re}}
\DeclareMathOperator*{\argmin}{arg\,min ~}
\newcommand{\fro}[1]{\left\| #1\right\|^2}
\newcommand{\scal}[1]{\left \langle #1 \right \rangle}
\newcommand{\para}[1]{\left( #1\right)}
\newcommand{\R}{\mathbb{R}}
\newcommand{\C}{\mathbb{C}}
\newcommand{\m}{\mathbb{M}}
\newcommand{\V}{\mathcal{V}}
\newcommand{\W}{\mathcal{W}}
\newcommand{\U}{\mathcal{U}}
\newcommand{\J}{\mathscr{J}}
\renewcommand{\S}{\mathcal{S}}
\newcommand{\Q}{\mathcal{Q}}
\renewcommand{\H}{\mathcal{H}}
\newtheorem{theorem}{Theorem}[section]
\newtheorem{proposition}[theorem]{Proposition}
\newtheorem{corollary}[theorem]{Corollary}
\begin{document}

\title{On Convex Envelopes and Regularization of Non-Convex Functionals without moving Global Minima.}

\author{Marcus Carlsson\thanks{Centre for Mathematical Sciences , Lund University,  {mc@maths.lth.se}}
}
\date{}

\maketitle

\begin{abstract}
We provide theory for the computation of convex envelopes of non-convex functionals including an $\ell^2$-term, and use these to suggest a method for regularizing a more general set of problems. The applications are particularly aimed at compressed sensing and low rank recovery problems but the theory relies on results which potentially could be useful also for other types of non-convex problems. For optimization problems where the $\ell^2$-term contains a singular matrix we prove that the regularizations never move the global minima. This result in turn relies on a theorem concerning the structure of convex envelopes which is interesting in its own right. It says that at any point where the convex envelope does not touch the non-convex functional we necessarily have a direction in which the convex envelope is affine.
\end{abstract}

\section{Introduction}

This article is a compressed and improved version of \cite{carlsson2016arxiv}, which contains more information and potentially more errors. The present work is the extension of a chain of ideas with its roots in compressed sensing. $\ell^1-\ell^2$-minimization tricks have a long history and got renewed attention with the work of Donoho, Cand\'{e}s and Tao among others \cite{chen2001atomic,donoho2006most,candes2005decoding}. In the same spirit the nuclear norm minimization strategy was investigated by Fazel and coworkers \cite{fazel2002matrix,recht2010guaranteed} and in both cases it was shown that these methods yield perfect reconstructions in the case of no noise. However, in realistic scenarios these results often do not apply and moreover there is of course noise, in which case the methods come with a (sometimes severe) bias. Moreover they are slow since one needs to find an appropriate value of involved penalty parameters.

Due to such issues there is a wealth of non-convex variations to replace $\ell^1/$nuclear norm in the area of compressed sensing, we refer to \cite{ourselves} for a survey. Two fairly recent contributions in this vein is the work by Carl Olsson and coworkers \cite{larsson2016convex} as well as by Gilles Aubert and coworkers \cite{soubies2015continuous}. The former paper deals with non-convex matrix minimization problems with subspace constraints, the latter with sparse reconstructions, and in particular the latter shows that the concrete regularizer considered there has the desirable property of not moving global minima. In this paper we find a unifying framework and show that all these penalties are particular cases of the so called ``proximal hull'' or ``quadratic envelope''. We systematically study this as a regularizer and in particular we lift the result of Aubert et al.~to a general context. In order to do so we provide new results on the structure of lower semi-continuous (abbreviated l.s.c.) convex envelopes which are interesting in their own right. More precisely we show that whenever a l.s.c.~convex envelope is not in touch with the function that generates it, then it necessarily has a direction in which it is affine linear.

\section{Outline and motivation}\label{sec_outline}

We develop methods to compute the lower semi-continuous convex envelope of functionals of the form \begin{equation}\label{pgf}f(x)+\frac{1}{2}\|x-d\|^2_2,\end{equation} and show that this is of the form $\Q(f)(x)+\frac{1}{2}\|x-d\|^2_2$, where $\Q(f)$ is the proximal hull or ``quadratic envelope'', as we shall call it. Here $x$ can be in any separable Hilbert space but $f$ needs to be such that the global minimization of \eqref{pgf} is computable. The practical applications of $\Q(f)$ pertains to optimization of \eqref{pgf} with additional constraints, as well as unconstrained optimization of \begin{equation}\label{pgf1}f(x)+\frac{1}{2}\|Ax-d\|^2_2,\end{equation} where $A$ is a linear operator.

To introduce the main ideas behind this work we consider two concrete problems. A multitude of applications can be posed mathematically as finding the lowest rank matrix $X$ satisfying some equation ${A}(X)=d$, where $A$ is a linear operator and $d$ is a measurement (see e.g.~\cite{recht2010guaranteed,tseng2010approximation}). Usually the measurement $d$ is not perfect so in practice one wishes to find the minimum rank given some accepted error; $\|A(X)-d\|\leq \rho$. The dual formulation of this problem is \begin{equation}\label{i8}\argmin_{X}\lambda\rank(X)+\|A(X)-d\|^2,\end{equation}\begin{wrapfigure}{r}{0.5\textwidth}
     \includegraphics[width=0.5\textwidth]{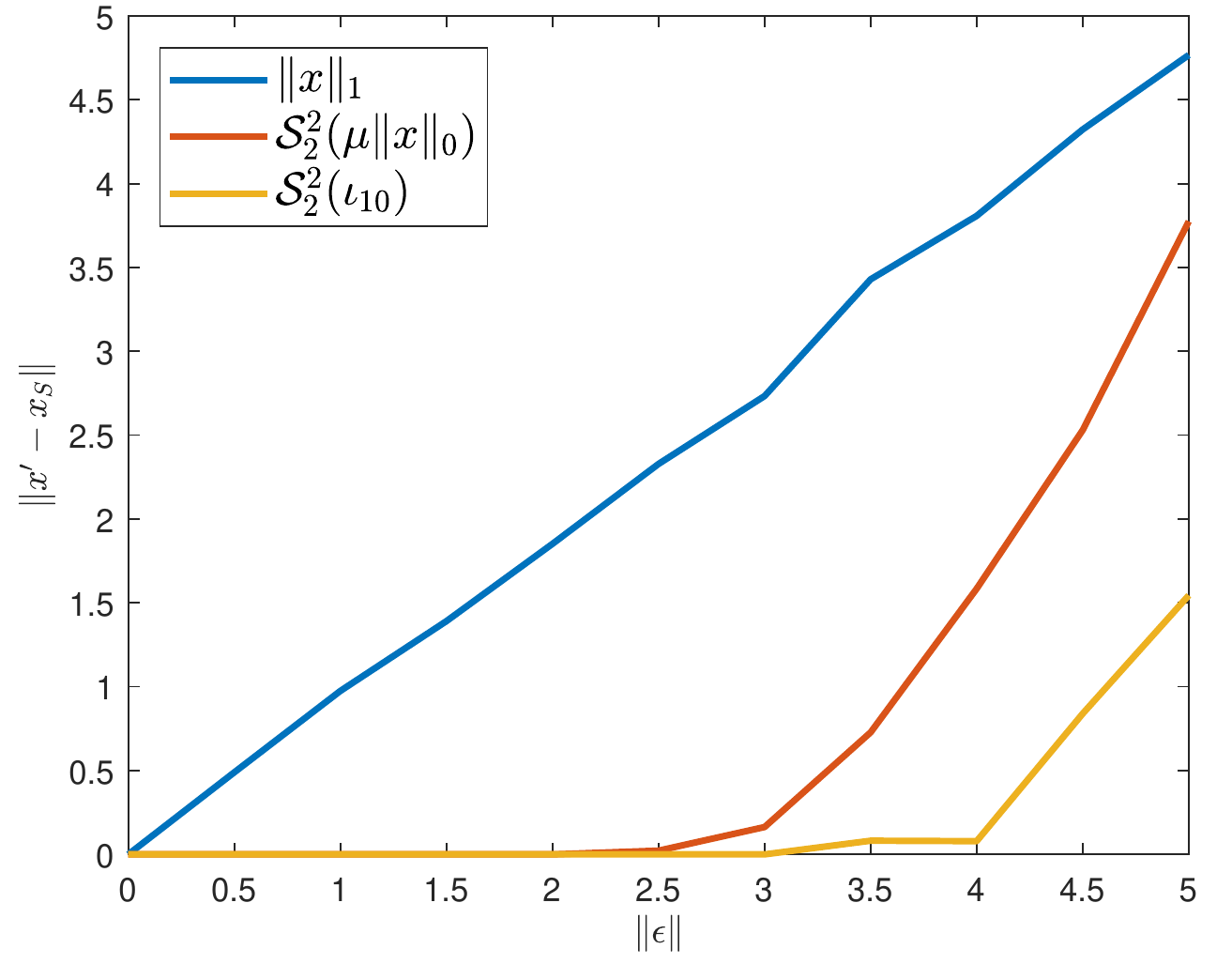}
 \caption{Reguarlizing \eqref{t4} by $\Q(\card)$ and $\Q(\iota_{10})$ finds the oracle solution up to noise levels of around $\|\epsilon\|=3$ and $4$ (roughly 30\% of $\|d\|$) whereas $\ell^1$-regularization only finds this solution with no noise.}\label{f4}
\end{wrapfigure} where $\lambda$ is a parameter. However, the functional $\rank(X)$ is non-convex and highly discontinuous, so the problem can not be solved as stated (in general). It can be solved for the case $A=I$ but the problem is still hard when combined with additional priors, see e.g.~Section 1.1 in \cite{larsson2016convex} for an overview and  applications in signal processing and imaging. 

Due to the problematic nature of $\rank(X)$ it has become popular to replace $\rank(X)$ with the nuclear norm of $X$. However, $\rank (X)$ and the nuclear norm are quite far apart and the method leads to a bias in the solution, which led the authors of  \cite{larsson2016convex} to suggest working instead with the convex envelope of $\rank(X)+\frac{1}{2}\|X-D\|^2_F$ for which they obtained an explicit expression. They also provided the convex envelope when $\rank(X)$ is replaced by the indicator functional of the set  $\{X:~\rank(X)\leq K\}$,
in order to treat problems where a matrix of a fixed rank is sought, and this convex envelope was further studied in \cite{andersson2016convex}.

Independently, convex envelopes was used in \cite{soubies2015continuous} to suggest a regularizer to functionals of the type \begin{equation}\label{t5} \|x\|_0+\frac{1}{2}\|Ax-d\|^2_2,\quad x\in\R^n,\end{equation} which is usually dealt with by replacing $\|x\|_0$ by $\lambda\|x\|_1$. The main contribution of their work is to show that their regularizer does not move global minima. A common misconception is that the same holds for $\ell^1$-methods, which is true only if there is no noise \cite{candes2006robust}. In the presence of noise the estimates for $\ell^1$-methods are rather poor and \cite{soubies2015continuous} is the first framework which allows for regularization without moving minima in a more realistic scenario.

This paper presents a unified approach to this circle of ideas by connecting them with the 
 ``quadratic envelope'' $\Q(f)$. We also extend the findings of \cite{soubies2015continuous} to any problem of the form \eqref{pgf1} as long as $\Q(f)$ is computable. An expanded version of this article is found in \cite{carlsson2016arxiv} which contains a long list of instances where $\Q(f)$ is computable.

In particular $\Q(f)$ is computable for $\iota_K$; the indicator functional for $\{x\in\C^n:~\|x\|_0\leq K\}$. As a proof of concept, we compare performance of \eqref{t5} with $\|x\|_0$ replaced by $\lambda \|x\|_1$, with $\Q(\card)$ and with $\Q(\iota_K)$ (where $\card(x)=\|x\|_0$). We use a $100\times 200$ matrix $A$ and minimize the regularized version of \eqref{t5} for $d$ of the form $Ax_0+\epsilon$, where $x_0$ has cardinality 10 and $\epsilon$ takes on various levels of noise. As noted in \cite{ctr} the best one can hope for is then to recover the so called ``oracle solution'' $x_S$ (obtained if an oracle a priori revealed the correct support). As Figure \ref{f4} shows both $\Q(\card)$ and $\Q(\iota_{10})$ outperform $\ell^1$ and finds the oracle solution for fairly large levels of noise. Also $\Q(\iota_{10})$ beats $\Q(\card)$, which is no surprise since it contains additional information about the problem built into it, and demonstrates the versatility of the new $\Q$-transform. The article \cite{ourselves} contain much more information about this particular case.

We now outline the main contributions of this paper in greater detail. Consider any functional of the form
\begin{equation}\label{t553} f(x)+\frac{\gamma}{2}\|x-d\|^2_\V\end{equation}
where $\gamma>0$ is a parameter, $\V$ is an arbitrary separable Hilbert space and $f$ a non-negative functional on $\V$. 
In Section \ref{son} we introduce the transform $\Q_\gamma$ and show that the l.s.c.~convex envelope of the functional in \eqref{t553} is
\begin{equation}\label{t55}\Q_\gamma(f)(x)+\frac{\gamma}{2}\|x-d\|^2_\V.\end{equation} In order for $\Q_\gamma(f)$ to be computable, the global minimization of \eqref{t553} needs to be solvable, and hence the problem of minimizing \eqref{t553} in itself is not an instance where the $\Q_\gamma$-transform is useful. However, it is useful for finding global minimizers of \eqref{t553} in combination with additional prior restrictions. To illustrate, consider the problem
\begin{equation}\label{restrictedproblem1}\argmin_{x\in\H} f(x)+\frac{1}{2}\|x-d\|^2,\end{equation}
where $\H$ is a closed convex subset of $\V$, and suppose we are unable to find a closed form solution. Upon replacing \eqref{restrictedproblem1} with
\begin{equation}\label{restrictedproblem3}\argmin_{x\in\H} \Q_\gamma(f)(x)+\frac{1}{2}\|x-d\|^2,\end{equation} for some fixed
$\gamma\leq 1$, we obtain a convex problem which can be solved. However, even for $\gamma=1$ it is possible that \eqref{restrictedproblem1} and \eqref{restrictedproblem3} have different solutions, despite the functional in \eqref{restrictedproblem3} being the l.s.c.~convex envelope of the one in \eqref{restrictedproblem1}. The rationale behind replacing \eqref{restrictedproblem1} by \eqref{restrictedproblem3} is pragmatical; since the latter is convex the solution may be found using convex optimization routines. This may seem ad hoc but we remind the reader that replacing e.g.~$\|x\|_0$ by $\|x\|_1$ or $\rank(X)$ by the nuclear norm $\|X\|_1$ has had a substantial impact, and that for these concrete cases the modification $\Q_\gamma(f)$ is much closer to the original functional $f$ (which leads to a better performance as an estimator, see the numerical sections of \cite{ourselves,larsson2016convex}). A reason for this is that $\Q_\gamma(f)$ has the desirable feature that $\Q_\gamma(f)(x)=f(x)$ often holds, and since \eqref{restrictedproblem3} is a convex problem below the original problem \eqref{restrictedproblem1}, it is easy to see that a minimum $\hat x$ to \eqref{restrictedproblem3} is the solution to \eqref{restrictedproblem1} whenever $\Q_\gamma(f)(\hat x)=f(\hat x)$. This is highlighted in Figure \ref{fig2intro} where the two problems have the same solution. More information and examples on this type of problems is found in Part II of \cite{carlsson2016arxiv}.

\begin{figure}
\centering
\includegraphics[width=0.48\linewidth]{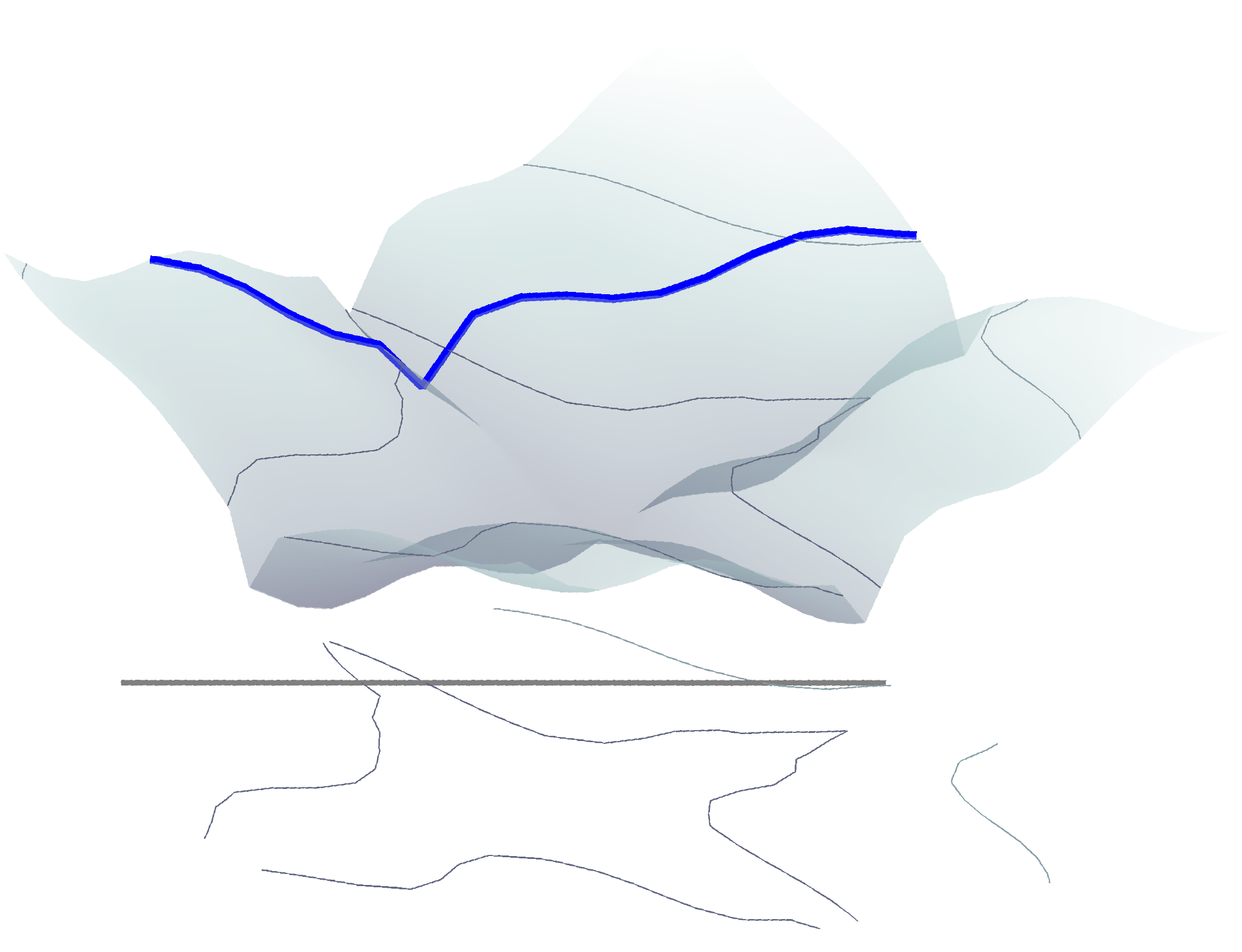}
\includegraphics[width=0.48\linewidth]{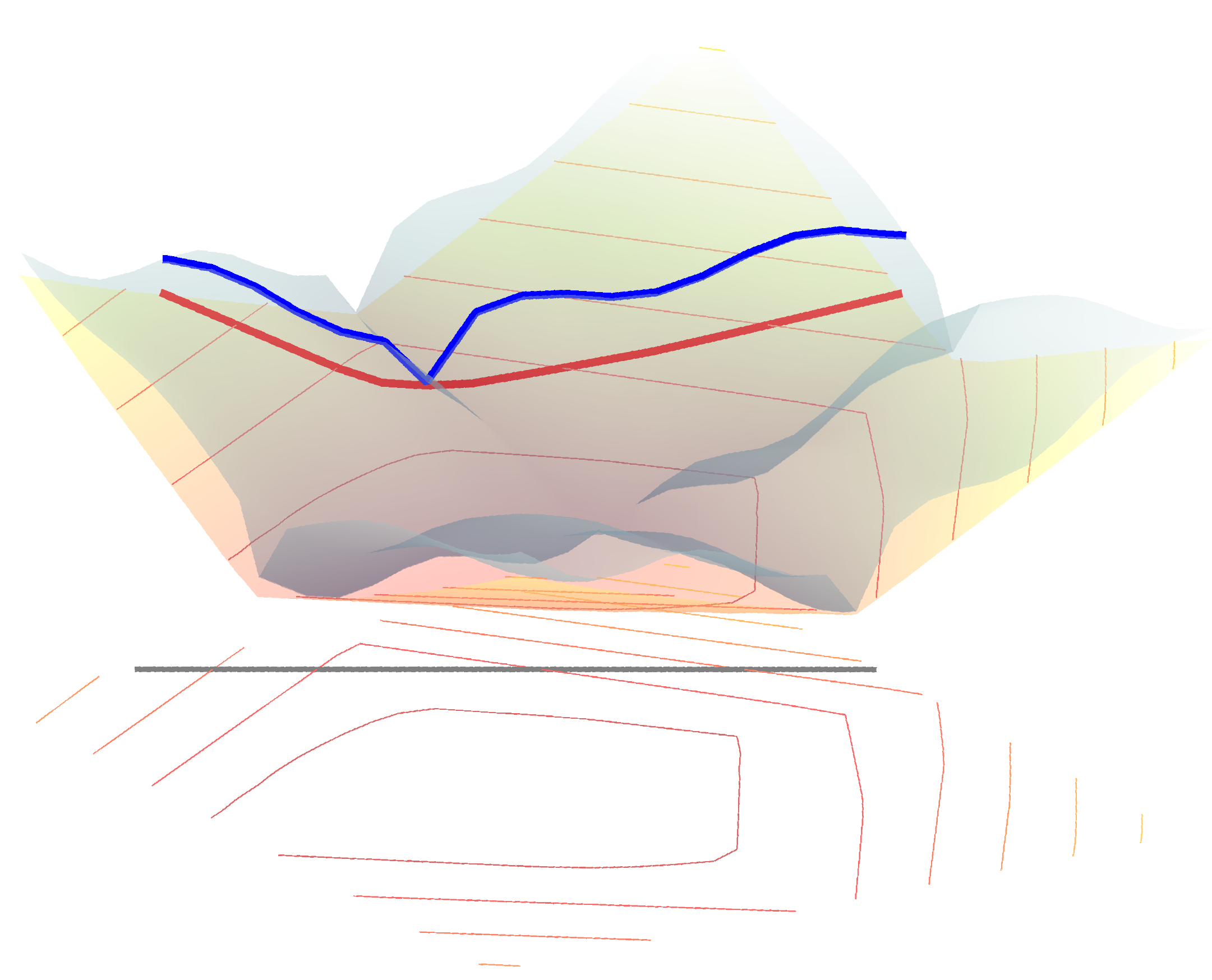}
\caption{Illustration of a non-convex optimization problem with linear constraints. The left panel shows a non-convex functional along with its level sets. The gray line represents the subspace we are interested in, and the blue curve the values of the functional restricted to the subspace. The right panel shows the same setup, but here the convex envelope is shown as well in orange/yellow. The values of the convex envelope over the subspace is shown in the red curve. In this case, the minima of blue and red function coincide.}
\label{fig2intro}
\end{figure}

 In Section \ref{Sreg} we consider regularization of functionals like \eqref{i8} and \eqref{t5}, or more generally \begin{equation}\label{t4i}f(x)+\frac{1}{2}\fro{A x-d}_\W\end{equation} for arbitrary non-negative $f$, where $A:\V\rightarrow \W$ is a linear operator between separable Hilbert spaces. We assume that $\V$ is such that $\Q_\gamma(f)$ is computable and that the convex envelope of \eqref{t4i} is untractable. We propose to use as regularizer the function $\Q_\gamma(f)$, i.e. we will study the relationship between minimizers of \eqref{t4i} and those of \begin{equation}\label{t4modi}\Q_\gamma(f)(x)+\frac{1}{2}\fro{A x-d}_\W.\end{equation}
Since it often holds that $\Q_\gamma(f)(x)=f(x)$, we again see that a global minimizer of \eqref{t4modi} for which this is the case must also be a global minimizer of \eqref{t4i}, in view of the inequality $\Q_\gamma(f)\leq f$ (shown in Section \ref{son}). The parameter $\gamma$ now becomes a useful tool as it tunes the curvature of $\Q_\gamma(f)$ and we pause to illustrate its role by considering a toy problem in one variable; see Figure \ref{fig3intro}. We let $|x|_0$ be the function equalling 1 on $\R\setminus\{0\}$ and zero at $x=0$. In red we see the functional $|x|_0+\frac{1}{2}|x-1|^2$ (which is a particular case of both \eqref{i8} and \eqref{t5} in dimension 1, the matrix $A$ is here the number 1), in blue its convex envelope and in pink the $\ell^1$ convex relaxation $|x|+\frac{1}{2}|x-1|^2$. Clearly the global minimum of the red and blue coincide, but the global minimum of the $\ell^1$-relaxation is different.
For \eqref{t4modi} we have two options, either $|A|^2>\gamma$ or $|A|^2<\gamma$. The regularizer \eqref{t4modi} is illustrated in black for these two cases in Figure \ref{fig3intro}. The circles represent global minima of the respective functions. In the case $|A|^2>\gamma$ we see that \eqref{t4modi} is a \textit{convex} minorant of \eqref{t4i} whose global minima (for this choice of parameters) is equal to that of \eqref{t4modi}. In the case $|A|^2<\gamma$, \eqref{t4modi} is no longer convex but the local minima of \eqref{t4modi} are also minima of \eqref{t4i}, and \eqref{t4modi} has fewer local minima. In particular the global minima coincide. The main point of the paper is loosely that the general behavior is the same.

\begin{figure}
\centering
\includegraphics[width=0.48\linewidth]{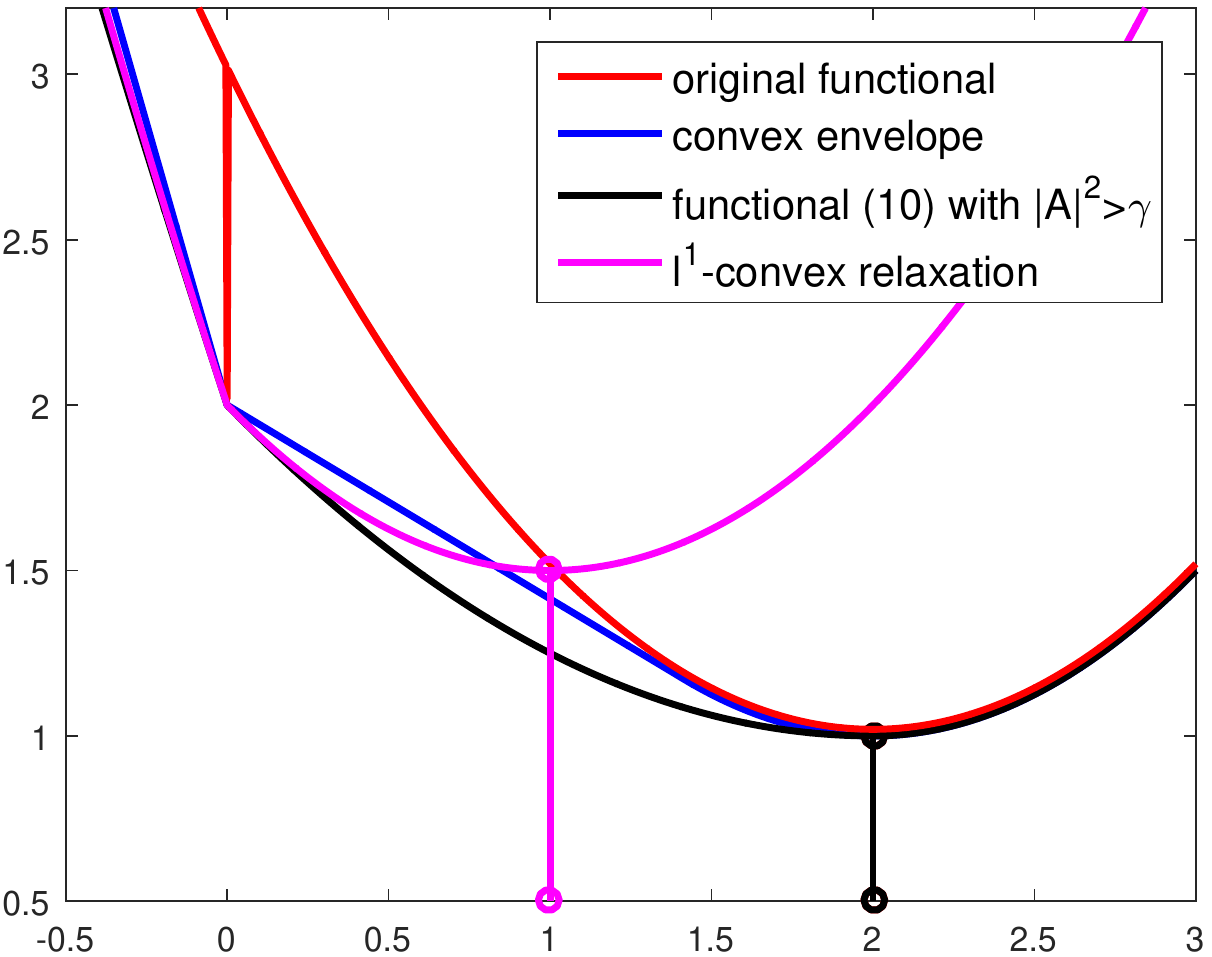}
\includegraphics[width=0.48\linewidth]{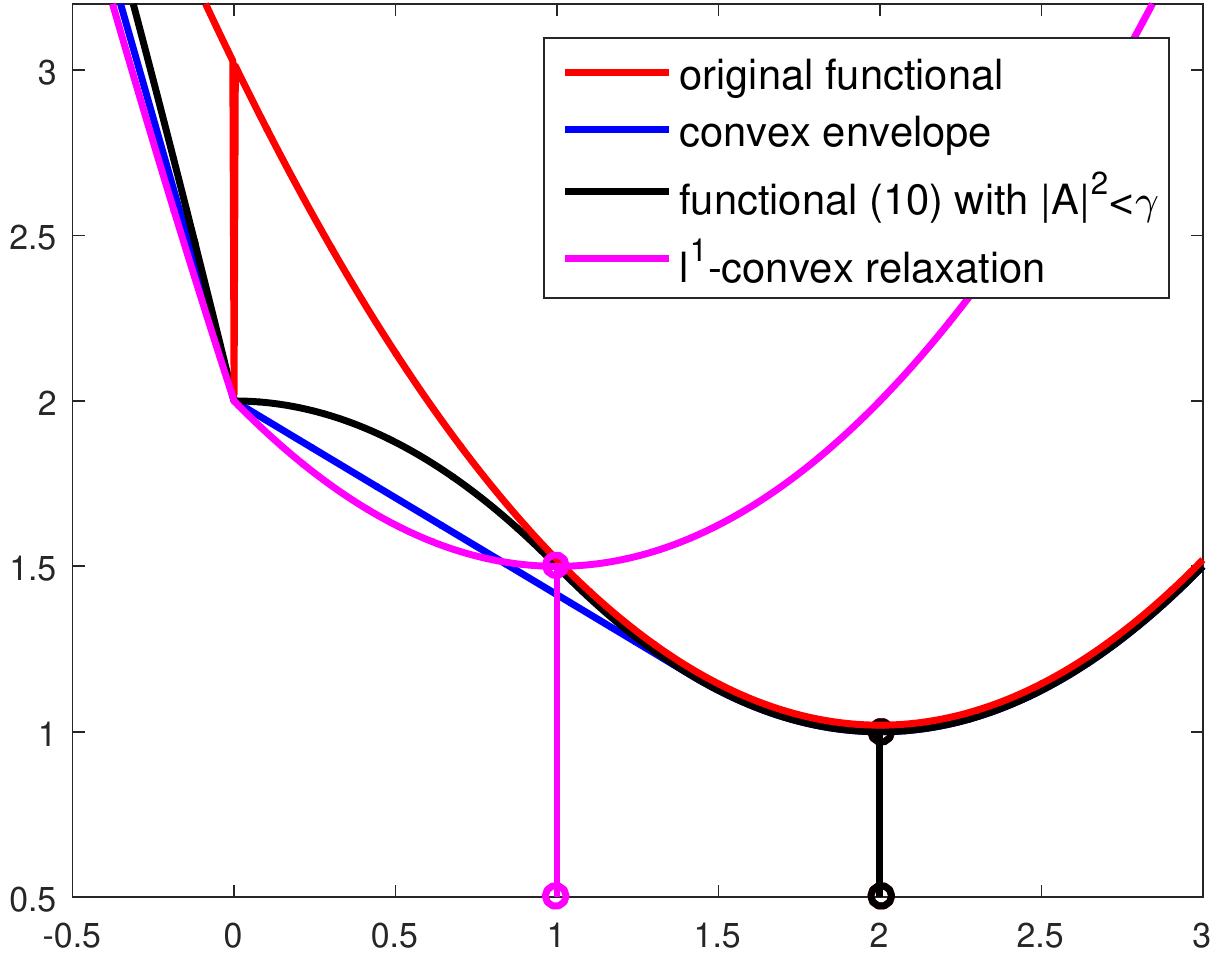}
\caption{The black curve shows two regularizations of the red curve, for different levels of $\gamma$.}
\label{fig3intro}
\end{figure}

In Section \ref{secunderestimate} we generalize the situation in Figure \ref{fig3intro} (left) and assume that $\gamma$ satisfies $A^*A\succcurlyeq \gamma I$, i.e. that \begin{equation}\label{case 1}\|Ax\|^2> \gamma \|x\|^2.\end{equation} For such choice of $\gamma$ we prove that the functional \eqref{t4modi} is a convex functional below \eqref{t4i} and hence minimization of \eqref{t4modi} will produce a minimizer which, although not necessarily equal to the minimizer of the original problem, potentially is closer than that obtained by other convex relaxation methods. 

For the problem \eqref{t5} $A$ is usually a matrix with a large kernel which rules out the above approach. In Section \ref{secoverestimate} we consider the case \begin{equation}\label{case 2}\|A\|^2\leq \gamma,\end{equation} generalizing the situation in the right picture of Figure \ref{fig3intro}. We can then show that \eqref{t4modi} is a continuous (but not convex) functional with the following desirable properties: \begin{itemize}\item[$i)$] \eqref{t4modi} lies between \eqref{t4i} and its l.s.c. convex envelope, \item[$ii)$] any local minimizer of \eqref{t4modi} is a local minimizer of \eqref{t4i}, \item[$iii)$] the global minimizers of \eqref{t4modi} and \eqref{t4i} coincide. \end{itemize}

These findings in turn rely on general results about l.s.c.~convex envelopes which we provide in Section \ref{finer}. The computation of the l.s.c.~convex envelope of $f(x)+\frac{\gamma}{2}\|x\|^2$ can be thought of as stretching a plastic foil from below onto the graph of $f(x)+\frac{\gamma}{2}\|x\|^2$ (see Figure \ref{fig2intro}). Consider a point $x$ where the plastic foil is not in contact with the graph, i.e. where $\Q_\gamma(f)(x)<f(x)$. It is intuitively obvious that the plastic foil, i.e. the graph of $\Q_\gamma(f)(x)+\frac{\gamma}{2}\|x\|^2$, has some direction in which it is affine linear and thus $\Q_\gamma(f)$ should have some direction in which the curvature is $-\gamma$. This is surprisingly difficult to show and despite the wealth of results on l.s.c.~convex envelopes it is not found in any standard reference on the topic. The statement is shown in the PhD-thesis \cite{lucet} for the finite dimensional case. Here we provide a proof is based on an extension of Milman's theorem due to Arne Br\o ndsted \cite{brondsted1966milman} in a short note from 1966. 

The final Section \ref{semi} is more practical in nature. Critical points of \eqref{t4modi} can be found using the forward-backward splitting method (FBS), given that $\Q_\gamma(f)$ is ``semi-algebraic'', as was shown in \cite{attouch2013convergence}. To simplify verification of when $\Q_\gamma(f)$ is semi-algebraic we show in Section \ref{semi} that this is true as long as $f$ itself is semi-algebraic. Further tools to compute $\Q_\gamma(f)$ as well as related proximal operators are found in \cite{carlsson2016arxiv}.

\section{The quadratic envelope}\label{son}

Let $\V$ be a separable Hilbert space over $\R$ or $\C$, such as $\C^n$ with the canonical norm $\|x\|_2^2=\sum_{j=1}^n |x_j|^2$ or $\m_{m,n}$, equipped with the Frobenius norm which we denote $\|X\|_F$. All Hilbert spaces over $\C$ are also Hilbert spaces over $\R$ with the scalar product $ \scal{ x,y}_\R=\Re \scal{ x,y}$ and hence it is no restriction to assume that $\V$ is a real Hilbert space wherever needed. Even if $\V$ is a Hilbert space over $\C$ we will implicitly assume that the scalar product is $\scal{ x,y}_\R$.

Given any functional $f:\V\rightarrow \R\cup\{\infty\}$ and parameter $\gamma>0$ we introduce the ``quadratic envelope'' $\Q_\gamma$ as the supremum of all minimizers of the form $\alpha-\frac{\gamma}{2}\|x-y\|^2$ for $\alpha\in\R$ and $y\in\V$;
\begin{equation}\label{QE}
\Q_{\gamma}(f)(x)=\sup_{\alpha\in\R,y\in\V}\left\{\alpha-\frac{\gamma}{2}\|x-y\|^2:~\alpha-\frac{\gamma}{2}\|\cdot-y\|^2\leq f\right\}.
\end{equation}

\begin{wrapfigure}{r}{0.48\textwidth}
     \includegraphics[width=0.48\textwidth]{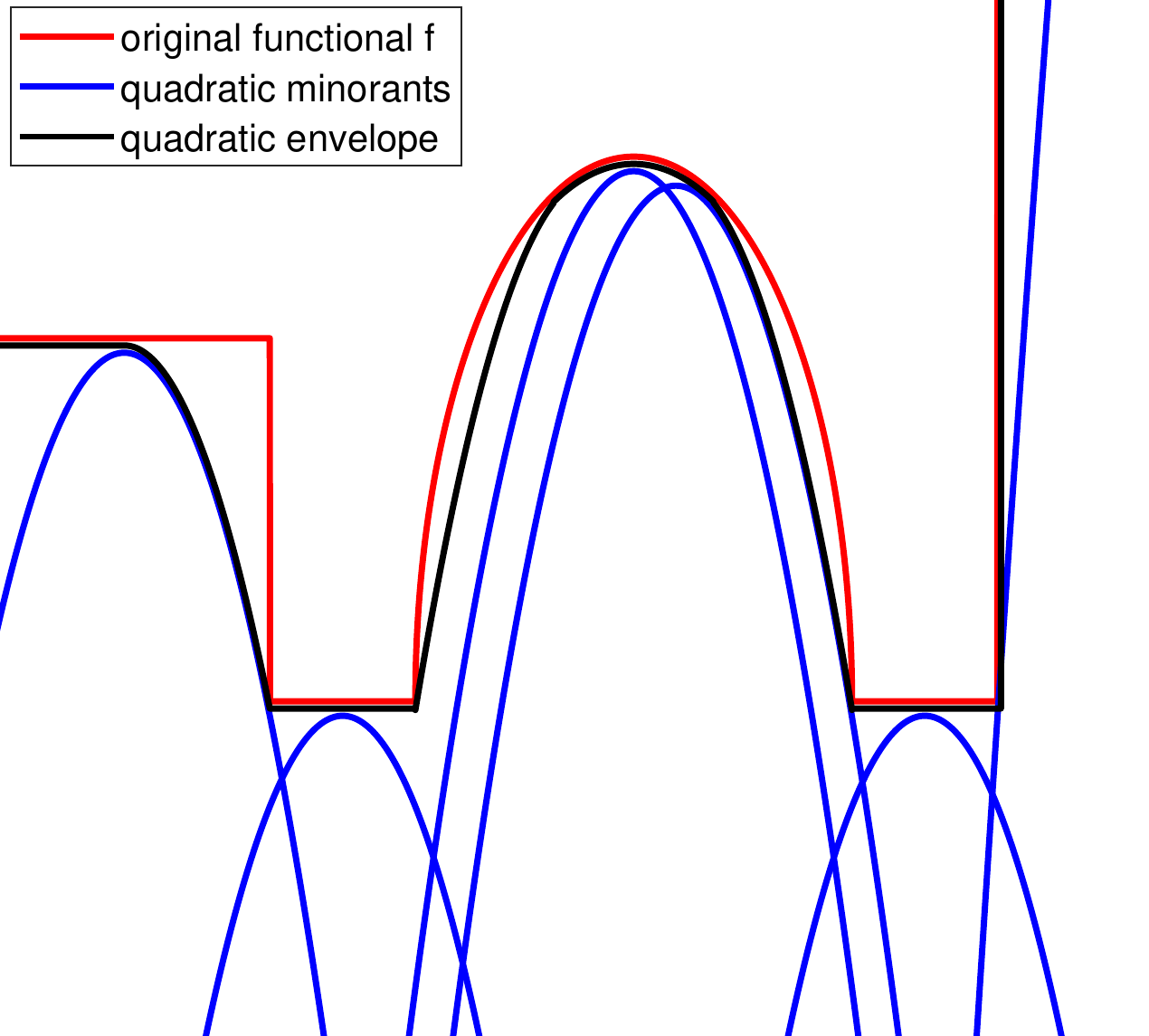}
 \caption{Illustration of a non-convex function $f$ (red) and its quadratic envelope $Q_2(f)$ (black). The black graph lies slightly below for illustration only.}\label{f456}
\end{wrapfigure}

The quadratic envelope has appeared previously e.g.~in \cite{rockafellar2009variational} under the name ``proximal hull'', denoted $h_{\gamma^{-1}}$ (Example 1.44), but it seems that the term is not widespread (see the discussion in Section \ref{sec_rel}) and it seems that its connection with convex envelopes has not been noted or at least not systematically studied. We prefer the term quadratic envelope since it is more illustrative, and prefer the notation $\Q_\gamma$ since it would be messy to always have to invert $\gamma$ which in this context has a concrete meaning; The parameter $\gamma$ basically tunes the maximum negative curvature of $\Q_\gamma(f)$ as we shall see in Section \ref{finer} (Corollary \ref{propconcave}). When $\gamma=1$ we simply write $\Q$ as opposed to $\Q_\gamma$. In this section we first provide some tools to compute $\Q_\gamma$, then prove the connection with l.s.c.~convex envelopes and end with some auxiliary results and a discussion of connections to previous concepts and terminology.

The Legendre transform (or Fenchel conjugate) is defined as $g^*(y):=\sup_x  \scal{ x,y}-g(x).$
We remind the reader that $g^*$ is l.s.c convex and that $g^{**}$ equals the l.s.c.~convex envelope of $g$ by the Fenchel-Moreau theorem (see e.g.~Proposition 13.11 and 13.39 in \cite{bauschke2011convex}).
We now introduce the transform $\S_\gamma$ defined as follows: \begin{equation}\label{defS}\S_\gamma(f)(y):= \para{f(\cdot)+\frac{\gamma}{2}\|\cdot\|^2}^*(\gamma y)-\frac{\gamma}{2}\|y\|^2=\sup_x -f(x)-\frac{\gamma}{2}\fro{x-y}.\end{equation}
$\S_{\gamma}$ is simply the negative of the Moreau envelope computed with constant $\gamma^{-1}$. If we set $q_\gamma(x,y)=-\frac{\gamma}{2}\|x-y\|^2$ then, in the terminology of \cite{rockafellar2009variational} Sec.~11.L, $\S_\gamma(f)$ is the $q_\gamma$-conjugate of $f$ and $\Q_\gamma(f)$ the $q_\gamma$-envelope of $f$ (reinforcing our choice of terminology ``quadratic envelope'' for $\Q_\gamma$). We introduce the symbol $\S_{\gamma}$ mainly since we believe the notation $-e_{\gamma^{-1}}(f)$ or $\mathstrut^{q_{\gamma}}f$ (c.f.~\cite{rockafellar2009variational}) or $-\mathstrut^{\gamma^{-1}}f$ (c.f.~\cite{bauschke2011convex}) would be confusing for our present purposes. Its connection to the quadratic envelope is described by the following proposition;

\begin{proposition}\label{propQ}
Let $\gamma>0$ and let $f$ be a $[0,\infty]$-valued l.s.c.~functional on a separable Hilbert space $\V$. We have $\Q_{\gamma}=\S_\gamma\circ\S_\gamma:=\S_{\gamma}^2$, i.e.~
\begin{equation}\label{lasry1}\Q_{\gamma}(f)(x)=\sup_y\para{\inf_w f(w)+\frac{\gamma}{2}\fro{w-y}}-\frac{\gamma}{2}\fro{x-y} \end{equation}
\end{proposition}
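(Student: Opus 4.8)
The plan is to prove both assertions at once by reducing the optimization over paraboloid minorants in \eqref{QE} to the iterated transform appearing on the right-hand side of \eqref{lasry1}, which by inspection of \eqref{defS} is exactly $\S_\gamma^2(f)$. The key observation is that the admissibility constraint in \eqref{QE} decouples cleanly once the center $y$ is fixed, leaving a one-dimensional maximization in $\alpha$ whose solution is an infimal convolution.

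First I would fix $y\in\V$ and rewrite the constraint. The paraboloid $\alpha-\frac{\gamma}{2}\fro{\cdot-y}$ lies below $f$ if and only if $\alpha\leq f(w)+\frac{\gamma}{2}\fro{w-y}$ for every $w$, that is, if and only if $\alpha\leq m(y)$, where $m(y):=\inf_w\para{f(w)+\frac{\gamma}{2}\fro{w-y}}$. Since $f\geq 0$ we have $m(y)\geq 0$, so for each $y$ the set of admissible $\alpha$ is the nonempty interval $(-\infty,m(y)]$ (understood as all of $\R$ when $m(y)=\infty$). Splitting the joint supremum in \eqref{QE} into an outer supremum over $y$ and an inner supremum over admissible $\alpha$, and using that $\alpha\mapsto\alpha-\frac{\gamma}{2}\fro{x-y}$ is increasing, the inner supremum is obtained by pushing $\alpha$ up to $m(y)$. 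Taking the outer supremum over $y$ then yields
\[\Q_\gamma(f)(x)=\sup_y\para{m(y)-\frac{\gamma}{2}\fro{x-y}}=\sup_y\para{\inf_w\para{f(w)+\frac{\gamma}{2}\fro{w-y}}-\frac{\gamma}{2}\fro{x-y}},\]
which is precisely the right-hand side of \eqref{lasry1}.

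Finally I would identify this expression with $\S_\gamma^2(f)$. The second equality in \eqref{defS} gives $\S_\gamma(f)(y)=\sup_w\para{-f(w)-\frac{\gamma}{2}\fro{w-y}}=-m(y)$, and substituting this into $\S_\gamma(\S_\gamma(f))(x)=\sup_y\para{-\S_\gamma(f)(y)-\frac{\gamma}{2}\fro{x-y}}$ reproduces the very same double sup/inf. This simultaneously establishes $\Q_\gamma=\S_\gamma^2$ and formula \eqref{lasry1}. As an independent check one can instead compute, via the Fenchel--Moreau theorem, that $\S_\gamma^2(f)=\para{f+\frac{\gamma}{2}\fro{\cdot}}^{**}-\frac{\gamma}{2}\fro{\cdot}$, which also foreshadows the convex-envelope connection exploited later.

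The only genuinely non-cosmetic point, and hence the main obstacle, is justifying the reduction of the joint supremum over $(\alpha,y)$ to its optimal value $\alpha=m(y)$ together with the extended-real bookkeeping. One must verify that admissible $\alpha$ exist for every $y$, which is exactly what the hypothesis $f\geq 0$ guarantees via $m(y)\geq 0$, and one must handle the degenerate regimes where $m(y)=+\infty$ (i.e.\ $f\equiv+\infty$) or where the transforms take the value $-\infty$, so that all suprema and infima are interpreted consistently in $[-\infty,+\infty]$. Once these conventions are fixed the argument is a direct manipulation, and the l.s.c.\ assumption is not even needed for this particular identity.
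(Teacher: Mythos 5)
Your proof is correct and follows essentially the same route as the paper's: for fixed $y$ the maximal admissible $\alpha$ is $-\S_\gamma(f)(y)$, and taking the supremum over $y$ yields $\S_\gamma^2(f)$. Your extra care with the extended-real conventions and the remark that lower semi-continuity is not needed are fine but do not change the argument.
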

\begin{proof} The argument is a replica of Example 1.44 of \cite{rockafellar2009variational}, but is included for completeness. We have
$\alpha-\frac{\gamma}{2}\|\cdot-y\|^2\leq f$ iff $\alpha\leq f+\frac{\gamma}{2}\|\cdot-y\|^2$ so the maximal $\alpha$ for fixed $y$ is given by $\alpha=-\S_{\gamma}(f)(y)$. Thus $\Q_\gamma(f)(x)=\sup_{y\in\V}-\S_\gamma(f)(y)-\frac{\gamma}{2}\|x-y\|^2=\Q_{\gamma}(f)(x)$ as desired. \end{proof}

The next proposition contains some basic observations on the behavior of $\S_\gamma$ and $\Q_\gamma$.

\begin{proposition}\label{propprop}
Let $\gamma>0$ and let $f$ be a $[0,\infty]$-valued l.s.c.~functional on a separable Hilbert space $\V$. Then $\S_\gamma(f)$ takes values in $(-\infty,0]$ and is continuous whereas $\Q_\gamma(f)$ is lower semi-continuous, takes values in $[0,\infty]$ and is continuous in the interior of $\mathsf{dom}(\Q_\gamma(f))$.
\end{proposition}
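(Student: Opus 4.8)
The plan is to establish the $\S_\gamma(f)$ claims directly from \eqref{defS}, and then to read off all of the $\Q_\gamma(f)$ properties from the representation $\Q_\gamma=\S_\gamma^2$ of Proposition \ref{propQ}, i.e.\ from \eqref{lasry1}. Throughout I assume $f\not\equiv\infty$ (the case $f\equiv\infty$ being degenerate), so that $\mathsf{dom}(f)\neq\emptyset$. The range of $\S_\gamma(f)$ is then immediate: since $f\geq 0$ and $\frac{\gamma}{2}\fro{x-y}\geq 0$, every term $-f(x)-\frac{\gamma}{2}\fro{x-y}$ in \eqref{defS} is $\le 0$, so $\S_\gamma(f)(y)\le 0$; choosing $x_0\in\mathsf{dom}(f)$ gives $\S_\gamma(f)(y)\geq -f(x_0)-\frac{\gamma}{2}\fro{x_0-y}>-\infty$. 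For continuity I would work with the Moreau envelope $g:=-\S_\gamma(f)$, where $g(y)=\inf_x f(x)+\frac{\gamma}{2}\fro{x-y}$ is, by the above, finite and nonnegative. Because $f\ge 0$, any $x$ nearly attaining the infimum for $g(y_0)$ satisfies $\frac{\gamma}{2}\fro{x-y_0}\le g(y_0)+\varepsilon$ and so lies in a fixed ball; inserting such a near-minimizer into the infimum for a nearby $y$ and using $\fro{x-y}\le(\|x-y_0\|+\|y-y_0\|)^2$ yields $\limsup_{y\to y_0}g(y)\le g(y_0)$, that is, upper semicontinuity. The reverse inequality is symmetric: once $g$ is locally bounded above near $y_0$, the near-minimizers for $g(y)$ are again confined to a fixed ball for $y$ close to $y_0$, and the same expansion gives $g(y_0)\le\liminf_{y\to y_0}g(y)$. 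Hence $g$, and therefore $\S_\gamma(f)=-g$, is continuous.

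For $\Q_\gamma(f)$ the range and lower semicontinuity follow from the defining supremum \eqref{QE}. The bound $\Q_\gamma(f)\le f$ holds because every admissible competitor $\alpha-\frac{\gamma}{2}\fro{\cdot-y}$ in \eqref{QE} is $\le f$ pointwise; the bound $\Q_\gamma(f)\ge 0$ holds because the pair $(\alpha,y)=(0,x)$ is always admissible (as $-\frac{\gamma}{2}\fro{\cdot-x}\le 0\le f$) and contributes the value $0$ at the point $x$. Thus $\Q_\gamma(f)$ is $[0,\infty]$-valued. Lower semicontinuity is automatic, since by \eqref{QE} $\Q_\gamma(f)$ is a pointwise supremum of the functions $x\mapsto\alpha-\frac{\gamma}{2}\fro{x-y}$, each continuous in $x$, and any supremum of continuous functions is l.s.c.

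The key step for continuity on $\mathrm{int}\,\mathsf{dom}(\Q_\gamma(f))$ is to linearize. Completing the square in \eqref{lasry1} gives
\[ F(x):=\Q_\gamma(f)(x)+\frac{\gamma}{2}\fro{x}=\sup_{y\in\V}\Big(\gamma\,\Re\scal{x,y}+g(y)-\frac{\gamma}{2}\fro{y}\Big), \]
so $F$ is a pointwise supremum of affine functions of $x$ and is therefore convex, l.s.c., and proper (it is $\ge 0$, and finite wherever $f$ is, since $F\le f+\frac{\gamma}{2}\fro{\cdot}$). I would then invoke the standard fact that a proper l.s.c.\ convex function on a Banach space --- and $\V$ is a complete separable Hilbert space --- is continuous on the interior of its domain. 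The short proof is by Baire category: the sublevel sets $\{F\le n\}$ are closed (by l.s.c.) and convex and exhaust $\mathsf{dom}(F)$, so applying Baire inside any open ball contained in $\mathrm{int}\,\mathsf{dom}(F)$ shows that some $\{F\le N\}$ has nonempty interior; being bounded above on one ball, $F$ is by convexity locally bounded above throughout $\mathrm{int}\,\mathsf{dom}(F)$, which for a convex function forces local Lipschitz continuity there. Since $\frac{\gamma}{2}\fro{x}$ is finite and continuous, $\mathsf{dom}(F)=\mathsf{dom}(\Q_\gamma(f))$ and $\Q_\gamma(f)=F-\frac{\gamma}{2}\fro{\cdot}$ is continuous on the interior of its domain.

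The main obstacle is genuinely the infinite-dimensional continuity, in two places. For $\S_\gamma(f)$, the pointwise difference $\frac{\gamma}{2}\fro{x-y_1}-\frac{\gamma}{2}\fro{x-y_2}$ is affine and hence unbounded in $x$, so no uniform estimate is available; the remedy is the coercivity supplied by $f\ge 0$, which confines the relevant $x$ to a bounded set. For $\Q_\gamma(f)$ the subtlety is that a convex function finite on an open set need not be continuous in infinite dimensions --- continuity is not free as it would be in $\R^n$ --- so one must use both lower semicontinuity and completeness of $\V$, which is precisely what the Baire argument exploits. Everything else is bookkeeping with the defining suprema and infima.
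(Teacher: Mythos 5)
Your proof is correct, but it diverges from the paper's in one substantive place. The paper handles \emph{both} continuity claims with a single structural observation: by the first expression in \eqref{defS}, $\S_\gamma(f)(y)=\para{f+\frac{\gamma}{2}\fro{\cdot}}^*(\gamma y)-\frac{\gamma}{2}\fro{y}$ is an l.s.c.~convex functional minus a quadratic, and likewise for $\Q_\gamma(f)=\S_\gamma^2(f)$; since these functionals are finite where claimed, continuity follows from the standard result that a proper l.s.c.~convex function on a Banach space is continuous (indeed locally Lipschitz) on the interior of its domain (Corollary 8.30 in Bauschke--Combettes). For $\Q_\gamma(f)$ you arrive at exactly this argument --- your completion of the square in \eqref{lasry1} exhibiting $\Q_\gamma(f)+\frac{\gamma}{2}\fro{\cdot}$ as a supremum of affine functions is just a hands-on derivation of the same convex-plus-quadratic decomposition, and your Baire-category sketch is the proof of the cited corollary. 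For $\S_\gamma(f)$, however, you replace this with a direct $\varepsilon$-estimate on the Moreau envelope $g=-\S_\gamma(f)$, confining near-minimizers to a bounded set via $f\geq 0$ and expanding $\fro{x-y}$; this is a genuinely different and more elementary route, which is self-contained (it actually yields local Lipschitz continuity of $g$ without quoting any convex-analysis machinery), at the cost of not exposing the convexity structure of $\S_\gamma(f)$ that the paper reuses immediately afterwards. Both arguments are sound; your explicit remark that $f$ must be proper (else $\S_\gamma(f)\equiv-\infty$) is a hypothesis the paper leaves implicit.
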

\begin{proof}
The statement of the interchanging signs follows easily by the last line of \eqref{defS} which also shows that $\S_\gamma(f)$ avoids $-\infty$. By \eqref{defS} it follows that $\S_\gamma(f)$ (and $\Q_\gamma(f)$ by Proposition \ref{propQ}) is the difference of an l.s.c.~convex functional and a quadratic term. With this in mind the continuity statements follows by standard properties of l.s.c.~convex functionals (see e.g.~Corollary 8.30 \cite{bauschke2011convex}). \end{proof}

The following result is the key result of this section connecting the $\Q_\gamma$-transform with l.s.c.~convex envelopes.

\begin{theorem}\label{t1}
Let $\gamma>0$ and let $f$ be a $[0,\infty]$-valued functional on a separable Hilbert space $\V$. Then
$\para{f+\frac{\gamma}{2}\|\cdot-d\|^2}^*(y)=\S_\gamma(f)\para{\frac{y}{\gamma}+d}+\frac{\gamma}{2}\fro{\frac{y}{\gamma}+d}-\frac{\gamma}{2}\|d\|^2$
and
$$\para{f+\frac{\gamma}{2}\|\cdot-d\|^2}^{**}(x)=\Q_\gamma(f)(x)+\frac{\gamma}{2}\|x-d\|^2.$$
In particular, $\Q_\gamma(f)(x)+\frac{\gamma}{2}\|x-d\|^2$ is the l.s.c.~convex envelope of $f(x)+\frac{\gamma}{2}\|x-d\|^2$ and $0\leq \Q_\gamma(f)\leq f$.
\end{theorem}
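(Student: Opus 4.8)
The plan is to prove the two displayed identities and then read off the final statement as an immediate consequence. The core computation is the first identity, which relates the Fenchel conjugate of $f+\frac{\gamma}{2}\|\cdot-d\|^2$ to the transform $\S_\gamma$; everything else should follow by bookkeeping and an appeal to the Fenchel--Moreau theorem already cited in the excerpt.

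First I would compute $\para{f+\frac{\gamma}{2}\|\cdot-d\|^2}^*(y)$ directly from the definition $g^*(y)=\sup_x \scal{x,y}-g(x)$. Writing out the supremum and completing the square in the quadratic term $\frac{\gamma}{2}\|x-d\|^2$, I expect the cross terms $\scal{x,y}$ and $-\gamma\scal{x,d}$ to combine into a single quadratic $-\frac{\gamma}{2}\|x-(\frac{y}{\gamma}+d)\|^2$ plus constants independent of $x$. Matching this against the last expression in \eqref{defS}, namely $\S_\gamma(f)(y')=\sup_x -f(x)-\frac{\gamma}{2}\fro{x-y'}$ with $y'=\frac{y}{\gamma}+d$, should yield the first claimed identity after collecting the leftover constant terms into $\frac{\gamma}{2}\fro{\frac{y}{\gamma}+d}-\frac{\gamma}{2}\|d\|^2$. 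This is a routine but slightly delicate square-completion; the main care needed is tracking the real scalar product convention and the constants, but there is no conceptual obstacle.

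Next I would pass to the biconjugate. The cleanest route is to apply the Fenchel conjugate a second time to the expression just obtained, but it is more transparent to invoke Proposition \ref{propQ}, which gives $\Q_\gamma=\S_\gamma^2=\S_\gamma\circ\S_\gamma$. Concretely, I would take the first identity, substitute it into the definition of the second conjugate, and perform the analogous change of variables and square completion a second time; the two nested suprema should reorganize exactly into the double-supremum expression \eqref{lasry1} defining $\Q_\gamma(f)(x)$, with the residual quadratic terms assembling into $\frac{\gamma}{2}\|x-d\|^2$. This step is where the bookkeeping is heaviest, so the main obstacle I anticipate is keeping the affine shifts by $d$ and the scalings by $\gamma$ consistent through two applications of the transform without sign errors.

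Finally, the concluding assertions are corollaries requiring no real work. Since $g^{**}$ is the l.s.c.~convex envelope of $g$ by Fenchel--Moreau, the second identity literally states that $\Q_\gamma(f)(x)+\frac{\gamma}{2}\|x-d\|^2$ is the l.s.c.~convex envelope of $f(x)+\frac{\gamma}{2}\|x-d\|^2$. The inequality $\Q_\gamma(f)\le f$ follows because the convex envelope of any functional lies below that functional, so subtracting the common quadratic term from both sides of $g^{**}\le g$ gives $\Q_\gamma(f)\le f$ pointwise; alternatively it is immediate from the defining supremum \eqref{QE}, where the admissible quadratic minorants are all $\le f$ by constraint and the choice $\gamma\to\infty$-like pointwise evaluation recovers $f$. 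The lower bound $0\le\Q_\gamma(f)$ follows from $f\ge 0$ together with Proposition \ref{propprop}, which already records that $\Q_\gamma(f)$ takes values in $[0,\infty]$.
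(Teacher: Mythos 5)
Your proposal is correct and follows essentially the same route as the paper: a direct square-completion to establish the first conjugate identity, a second conjugation with the analogous change of variables to obtain $\S_\gamma^2(f)(x)+\frac{\gamma}{2}\|x-d\|^2$, identification of $\S_\gamma^2$ with $\Q_\gamma$ via Proposition \ref{propQ}, and Fenchel--Moreau plus Proposition \ref{propprop} for the concluding assertions. No gaps.
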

\begin{proof}
We have
\begin{align*}
&\para{f(\cdot)+\frac{\gamma}{2}\|\cdot-d\|^2}^*(y)=\sup_x \scal{ x,y}-f(x)-\frac{\gamma}{2}\|x-d\|^2=\\&=\sup_x -f(x)-\frac{\gamma}{2}\fro{x-\para{\frac{y}{\gamma}+d}}+\frac{\gamma}{2}\fro{\frac{y}{\gamma}+d}-\frac{\gamma}{2}\|d\|^2
\end{align*}
from which the first identity follows. Similarly
\begin{align*}
&\para{f(\cdot)+\frac{\gamma}{2}\|\cdot-d\|^2}^{**}(x)=\para{\S_\gamma(f)\para{\frac{\cdot}{\gamma}+d}+\frac{\gamma}{2}\fro{\frac{\cdot}{\gamma}+d}-\frac{\gamma}{2}\|d\|^2}^*(x)\\&=\sup_y  \scal{ x,y}-\S_\gamma(f)\para{\frac{y}{\gamma}+d}-\frac{\gamma}{2}\fro{\frac{y}{\gamma}+d}+\frac{\gamma}{2}\|d\|^2=\\&
=\sup_y  -\S_\gamma(f)\para{\frac{y}{\gamma}+d}-\frac{\gamma}{2}\fro{\frac{y}{\gamma}+d-x}+\frac{\gamma}{2}\|x-d\|^2=\S_\gamma^2(f)(x)+\frac{\gamma}{2}\|x-d\|^2.
\end{align*}
The statement about the convex envelope follows by the Fenchel-Moreau theorem and Proposition \ref{propQ}, which also gives $\Q_\gamma(f)(x)+\frac{\gamma}{2}\|x-d\|^2\leq f(x)+\frac{\gamma}{2}\|x-d\|^2$. This implies the latter part of the inequality $0\leq \Q_\gamma(f)\leq f$ whereas the former has already been noticed in Proposition \ref{propprop}.
\end{proof}

We end this section with some observations about the behavior of $\Q_\gamma(f)$ as a function of $\gamma$.

\begin{proposition}\label{pnew}
Let $f$ be an l.s.c.~$[0,\infty]$-valued functional. Then $\Q_\gamma(f)(x)$ is increasing as a function of $\gamma$. Moreover \begin{equation}\label{ggamma1}\lim_{\gamma\rightarrow \infty}\Q_\gamma(f)(x)=f(x)\end{equation}
whereas the limit as ${\gamma\searrow 0}$ equals a convex minimizer of $f$ above the l.s.c.~convex envelope of $f$.
\end{proposition}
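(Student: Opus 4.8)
The plan is to establish the three assertions—monotonicity in $\gamma$, the limit as $\gamma\to\infty$, and the limit as $\gamma\searrow 0$—separately, relying on the paraboloid description \eqref{QE}, the double-minorant formula \eqref{lasry1}, and Theorem \ref{t1}.

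\emph{Monotonicity.} Fix $0<\gamma_1\le\gamma_2$ and a point $x$. For any admissible paraboloid $P_1(w)=\alpha-\frac{\gamma_1}{2}\|w-y\|^2\le f$ appearing in \eqref{QE}, I would rewrite it about $x$ via the identity $P_1(w)=P_1(x)+\gamma_1\scal{y-x,w-x}-\frac{\gamma_1}{2}\|w-x\|^2$, and then raise only the curvature coefficient, setting $P_2(w):=P_1(x)+\gamma_1\scal{y-x,w-x}-\frac{\gamma_2}{2}\|w-x\|^2$. Completing the square shows $P_2$ is again a paraboloid of curvature $-\gamma_2$ (centred at $x+\frac{\gamma_1}{\gamma_2}(y-x)$), and since $P_1-P_2=\frac{\gamma_2-\gamma_1}{2}\|\cdot-x\|^2\ge 0$ we get $P_2\le P_1\le f$, so $P_2$ is admissible for $\Q_{\gamma_2}$ while $P_2(x)=P_1(x)$. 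Taking the supremum over all such $P_1$ gives $\Q_{\gamma_1}(f)(x)\le\Q_{\gamma_2}(f)(x)$, so $\Q_\gamma(f)(x)$ increases with $\gamma$.

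\emph{Limit $\gamma\to\infty$.} By monotonicity and the bound $\Q_\gamma(f)\le f$ from Theorem \ref{t1}, the limit exists and is $\le f(x)$. For the reverse inequality I would put $y=x$ in the outer supremum of \eqref{lasry1}, which yields $\Q_\gamma(f)(x)\ge\inf_w\para{f(w)+\frac{\gamma}{2}\|w-x\|^2}=-\S_\gamma(f)(x)$, the Moreau envelope of $f$. It then suffices to recall the monotone convergence of Moreau envelopes for l.s.c.~$f\ge 0$: when $f(x)<\infty$, near-minimizers $w_\gamma$ obey $\frac{\gamma}{2}\|w_\gamma-x\|^2\le f(x)$, so $w_\gamma\to x$ and lower semicontinuity forces the envelope up to $f(x)$; when $f(x)=\infty$, lower semicontinuity gives a neighbourhood of $x$ on which $f>M$, pushing the envelope above $\min\para{M,\frac{\gamma}{2}\delta^2}\to\infty$. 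Squeezing between $-\S_\gamma(f)(x)$ and $f(x)$ then gives \eqref{ggamma1}.

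\emph{Limit $\gamma\searrow 0$.} Set $g(x):=\lim_{\gamma\searrow 0}\Q_\gamma(f)(x)=\inf_{\gamma>0}\Q_\gamma(f)(x)$, which exists by monotonicity. That $g\le f$ is immediate. For $g\ge f^{**}$ (the l.s.c.~convex envelope), I would observe that for any affine $\ell\le f$ the function $\ell(\cdot)-\frac{\gamma}{2}\|\cdot-x\|^2$ is an admissible paraboloid in \eqref{QE} with value $\ell(x)$ at $x$, whence $\Q_\gamma(f)(x)\ge\ell(x)$ for every $\gamma$; taking the supremum over affine minorants gives $\Q_\gamma(f)\ge f^{**}$ and hence $g\ge f^{**}$. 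The main obstacle is the \emph{convexity} of $g$, since a decreasing limit of the generally non-convex functions $\Q_\gamma(f)$ need not be convex a priori. Here I would exploit that, by Theorem \ref{t1} with $d=0$, the function $h_\gamma:=\Q_\gamma(f)+\frac{\gamma}{2}\|\cdot\|^2$ is convex. Applying convexity of $h_\gamma$ along a segment $x_t=(1-t)x_0+tx_1$ together with the identity $(1-t)\|x_0\|^2+t\|x_1\|^2-\|x_t\|^2=t(1-t)\|x_0-x_1\|^2$ yields
\[
\Q_\gamma(f)(x_t)\le(1-t)\,\Q_\gamma(f)(x_0)+t\,\Q_\gamma(f)(x_1)+\tfrac{\gamma}{2}\,t(1-t)\|x_0-x_1\|^2 .
\]
Letting $\gamma\searrow 0$, the left side tends to $g(x_t)$, the first two right-hand terms to $g(x_0)$ and $g(x_1)$, and the quadratic error term vanishes, giving $g(x_t)\le(1-t)g(x_0)+t\,g(x_1)$. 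Thus $g$ is a convex minorant of $f$ lying above $f^{**}$, as claimed; the handling of infinite values is automatic since every inequality is between monotone limits in $[0,\infty]$.
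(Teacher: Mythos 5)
Your proof is correct, and although it follows the same three-part outline as the paper's, each step is carried out by a different mechanism. For monotonicity the paper argues through Theorem \ref{t1}: $\Q_{\gamma_2}(f)+\frac{\gamma_1}{2}\|\cdot\|^2$ is l.s.c., convex and below $f+\frac{\gamma_1}{2}\|\cdot\|^2$, hence below its biconjugate $\Q_{\gamma_1}(f)+\frac{\gamma_1}{2}\|\cdot\|^2$; you instead manufacture, from each admissible paraboloid of curvature $-\gamma_1$, an admissible paraboloid of curvature $-\gamma_2$ with the same value at $x$ --- a direct argument from \eqref{QE} that never invokes conjugacy. For $\gamma\to\infty$ the paper checks directly that $\alpha-\frac{\gamma}{2}\|\cdot-x\|^2\leq f$ for large $\gamma$ whenever $\alpha<f(x)$, using lower semicontinuity and $f\geq 0$; you route through the lower bound by the Moreau envelope $-\S_\gamma(f)$ and its classical convergence to $f$, which uses exactly the same two ingredients in a slightly different package. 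For $\gamma\searrow 0$ the paper identifies $g$ with the decreasing limit of the convex functions $\para{f+\frac{\gamma}{2}\|\cdot\|^2}^{**}\geq f^{**}$ and cites Proposition 8.16 of \cite{bauschke2011convex} for convexity of the limit, whereas you prove convexity by hand via the three-point inequality with the vanishing error term $\frac{\gamma}{2}t(1-t)\|x_0-x_1\|^2$, and obtain $g\geq f^{**}$ by observing that every affine minorant of $f$ yields an admissible paraboloid with the same value at $x$. Your version is more self-contained and makes explicit why the limit is convex even though no individual $\Q_\gamma(f)$ need be; the paper's is shorter because it leans on Theorem \ref{t1} and the biconjugate machinery throughout.
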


We remark that \eqref{ggamma1} is shown in \cite{rockafellar2009variational}, whereas nothing is said about the case $\gamma\searrow 0$. In fact, $\lim_{\gamma\searrow 0}\Q_\gamma(f)$ usually equals the l.s.c. convex envelope of $f$, but this is not necessarily the case in general, which is a surprise at least for the author. To see this, consider $P=\{x\in\R^2:~x_1>0,~x_2=\sqrt{x_1}\}$, $Q=\{x\in\R^2:~x_1>0,~0<x_2\leq\sqrt{x_1}\}\cup\{0\}$ and $f=\iota_{P}$, where $\iota_P$ is the indicator functional of $P$. It is easy to see that the l.s.c. convex envelope of $\iota_P$ equals $\iota_{cl(Q)}$ (where $cl$ denotes closure) whereas some thinking reveals that $\lim_{\gamma\searrow 0}\Q_\gamma(f)=\iota_Q$. However if $\V$ is finite dimensional and $\lim_{\gamma\searrow 0}\Q_\gamma(f)$ is everywhere finite, then it is automatically continuous (Corollary 8.30 in \cite{bauschke2011convex}), and hence it must equal the l.s.c.~convex envelope of $f$.\newline


\begin{proof}
If $\gamma_1>\gamma_2$ then $\Q_{\gamma_2}(f)(x)+\frac{\gamma_1}{2}\|x\|^2$ equals the l.s.c.~convex functional $\Q_{\gamma_2}(f)(x)+\frac{\gamma_2}{2}\|x\|^2$ plus the term $\frac{\gamma_1-\gamma_2}{2}\|x\|^2$ so it is l.s.c.~and convex. In view of $\Q_{\gamma_2}(f)\leq f$ it also lies below $f+\frac{\gamma_1}{2}\|x\|^2$ and so we conclude that
$$\Q_{\gamma_2}(f)(x)+\frac{\gamma_1}{2}\|x\|^2\leq \big(f+\frac{\gamma_1}{2}\|x\|^2\big)^{**}= \Q_{\gamma_1}(f)(x)+\frac{\gamma_1}{2}\|x\|^2.$$ The first claim follows.
To see \eqref{ggamma1} let $\alpha<f(x)$ be arbitrary. Since $f$ is l.s.c.~the set $\{y:f(y)>\alpha\}$ is open and, as $f\geq 0$, it follows that for any $\gamma$ large enough we have $\alpha-\frac{\gamma}{2}\|\cdot-x\|^2\leq f$. For such $\gamma$ we thus have $\alpha\leq \Q_{\gamma}(f)(x)\leq f(x)$ by \eqref{QE} and Theorem \ref{t1}, so \eqref{ggamma1} follows.


Concerning the limit as ${\gamma\searrow 0}$ set $g(x)=\lim_{\gamma\searrow 0}\Q_\gamma(f)(x)$ which exist by the first part of this proposition. Since $$g(x)=\lim_{\gamma\searrow 0}\Q_\gamma(f)(x)=\lim_{\gamma\searrow 0}\Q_\gamma(f)(x)+\frac{\gamma}{2}\|x\|^2=\lim_{\gamma\searrow 0}\big(f+\frac{\gamma}{2}\|\cdot\|^2\big)^{**}(x)\geq f^{**}$$
we see that $g$ is the limit of a decreasing sequence of convex functions, hence it is also convex (Proposition 8.16 \cite{bauschke2011convex}), and clearly $g\leq f$ by Theorem \ref{t1}.
\end{proof}

\section{Finer Properties of Convex and Quadratic Envelopes}\label{finer}

In this section, we prove a result about the structure of l.s.c.~convex envelopes which seems relatively unknown. For this we need the concept of weak lower semi-continuity, which is nothing but semi-continuity with respect to the weak topology of the underlying separable Hilbert space $\V$. We remind the reader that for convex proper functionals there is no difference (Theorem 9.1 \cite{bauschke2011convex}) between weakly l.s.c.~functionals and standard l.s.c.~functionals. Also, if $\V$ is finite dimensional and the topology is Hausdorff, the two topologies are the same so there is no difference in this case either. However we wish to underline that the difficulty in proving the coming results is present also in the finite-dimensional setting.


We begin with a neat fact concerning weakly l.s.c.~convex envelopes which does not seem to have made its way into the modern literature on the subject. It is a reformulation of Arne Br\o ndsted's extension of Milman's theorem \cite{brondsted1966milman}. To state it we remind the reader that a functional $g$ is \textit{coercive} if and only if its (lower) level sets are bounded (see e.g.~Proposition 11.11 \cite{bauschke2011convex}). Note that l.s.c.~convex envelopes of the type $\Q_\gamma(f)(x)+\frac{\gamma}{2}\|x-d\|^2$ (for positive $f$) always are coercive, by virtue of Proposition \ref{propprop} and the quadratic term.  A function $f$ on $\R$ is called affine if it is of the form $f(t)=at+b$ with $a,b\in\R$.
\begin{theorem}\label{brondsted}
Let $g$ be a weakly l.s.c.~functional on a separable Hilbert space $\V$ such that $g^{**}$ is coercive. Given any $x\in\V$ such that $g(x)\neq g^{**}(x)$ there exists a unit vector $\nu$ and $t_0>0$ such that the function $h(t)= g^{**}(x_0+t\nu)$ is affine on $(-t_0,t_0)$.
\end{theorem}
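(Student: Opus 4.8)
The plan is to argue at the level of epigraphs and to recast the claim as a statement about extreme points, which is exactly the form in which Milman's theorem (and Br\o ndsted's extension of it) is phrased. Write $\phi=g^{**}$, so that $\phi$ is weakly l.s.c., convex and coercive, and recall $\phi\le g$; hence the hypothesis $g(x)\ne g^{**}(x)$ means $\phi(x)<g(x)$. The structural fact I would build on is that $\operatorname{epi}(\phi)=\overline{\operatorname{conv}}\,\operatorname{epi}(g)$, the closed convex hull of $\operatorname{epi}(g)$, while $\operatorname{epi}(g)$ is itself weakly closed because $g$ is weakly l.s.c. Now consider the point $P=(x,\phi(x))$. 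It lies on the graph of $\phi$, i.e.\ on the lower boundary of $\operatorname{epi}(\phi)$, but since $\phi(x)<g(x)$ it does \emph{not} belong to $\operatorname{epi}(g)$. The goal is to show that $P$ fails to be an extreme point of $\operatorname{epi}(\phi)$; the affine direction will then fall out of convexity.

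The extreme-point step is the heart of the matter. A Milman-type theorem asserts that every extreme point of $\overline{\operatorname{conv}}\,S$ lies in $\overline{S}$; applied to $S=\operatorname{epi}(g)$, whose weak closure is $\operatorname{epi}(g)$ itself, this says that any extreme point of $\operatorname{epi}(\phi)$ already belongs to $\operatorname{epi}(g)$, so that $P\notin\operatorname{epi}(g)$ cannot be extreme. The obstruction is that the classical Milman theorem needs the convex set to be compact, whereas an epigraph never is. This is precisely where coercivity enters and where I would invoke Br\o ndsted's extension \cite{brondsted1966milman}: since $\phi$ is convex, l.s.c.\ and coercive, its sublevel sets are closed, bounded and convex, hence weakly compact in the reflexive space $\V$, and the same holds for the truncated epigraphs $\operatorname{epi}(\phi)\cap(\V\times(-\infty,c])$. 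Br\o ndsted's theorem is tailored to run the Milman argument using exactly this weak compactness, and I expect the bookkeeping needed to pass between the full epigraph and its weakly compact truncations---making sure $P$ is genuinely non-extreme rather than an extreme point merely created by capping at height $c$---to be the main technical obstacle. (In finite dimensions one can instead try to realize $g^{**}(x)$ as an attained finite convex combination $\sum_i\lambda_i g(y_i)$ with $\sum_i\lambda_i y_i=x$ via Carath\'eodory, the relation $\phi(x)<g(x)$ forcing two distinct $y_i$ with positive weight; but even there attainment requires care, and it is the infinite-dimensional version that genuinely relies on Br\o ndsted.)

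Granting that $P$ is not extreme, the conclusion follows by a short convexity computation. Non-extremality yields distinct $P_1=(y_1,s_1)$ and $P_2=(y_2,s_2)$ in $\operatorname{epi}(\phi)$ and $\lambda\in(0,1)$ with $P=\lambda P_1+(1-\lambda)P_2$, so that $x=\lambda y_1+(1-\lambda)y_2$ and $\phi(x)=\lambda s_1+(1-\lambda)s_2$. Using $s_i\ge\phi(y_i)$ together with convexity of $\phi$ gives
\[
\phi(x)=\lambda s_1+(1-\lambda)s_2\ge\lambda\phi(y_1)+(1-\lambda)\phi(y_2)\ge\phi\big(\lambda y_1+(1-\lambda)y_2\big)=\phi(x),
\]
forcing equality throughout. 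In particular $s_i=\phi(y_i)$, whence $y_1\ne y_2$ (otherwise $P_1=P_2$), and $\phi$ attains its chord value at the interior point $x$ of the segment $[y_1,y_2]$, so $\phi$ is affine on $[y_1,y_2]$ with $x$ in its relative interior. Setting $\nu=(y_2-y_1)/\|y_2-y_1\|$ and choosing $t_0>0$ small enough that $x\pm t_0\nu$ lies in the open segment $(y_1,y_2)$, we conclude that $h(t)=g^{**}(x+t\nu)$ is affine on $(-t_0,t_0)$, which is the assertion (with $x_0=x$).
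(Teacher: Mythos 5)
Your proposal is correct and follows essentially the same route as the paper: both reduce the claim to showing that $(x,g^{**}(x))$ is not an extreme point of the epigraph of $g^{**}$, invoke Br\o ndsted's extension of Milman's theorem (with coercivity supplying the weak compactness that replaces the compactness hypothesis of the classical statement) to conclude that extreme points of $\operatorname{epi}(g^{**})$ must lie in $\operatorname{epi}(g)$, and then extract the affine direction from non-extremality. Your closing convexity computation simply makes explicit the equivalence between non-extremality of $(x,g^{**}(x))$ and affineness of $g^{**}$ on a segment through $x$, which the paper takes as Br\o ndsted's definition of a non-extremal point.
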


To prove Theorem \ref{brondsted} we recall some concepts from \cite{brondsted1966milman}. Given a convex function $f$ a point $x$ is called \textit{extremal} if and only if $(x,f(x))$ is extremal for the epigraph of $f$, denoted $[f]$. Equivalently, $x$ is extremal if and only if $x\in\mathsf{dom}~f$ and $f$ is not affine on any relatively open segment containing $x$. Moreover $f_{ext}$ denotes the functional which equals $f(x)$ for all extremal points $x$ and $\infty$ else. As a consequence of Theorem 1 in \cite{brondsted1966milman} we have:

\begin{theorem}\label{arne}
Let $g$ be a weakly l.s.c.~functional on a separable Hilbert space $\V$ such that $g^{**}$ is coercive, then $$[(g^{**})_{ext}]\subset [g].$$
\end{theorem}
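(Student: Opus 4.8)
The plan is to derive Theorem~\ref{arne} from Br\o ndsted's Milman-type theorem for convex functions \cite{brondsted1966milman} by translating it into the language of epigraphs and then verifying its hypotheses in the present weak-topology setting. The starting observation is that, by the Fenchel--Moreau theorem, $g^{**}$ is the l.s.c.\ convex envelope of $g$, and at the level of epigraphs this means $[g^{**}]=\overline{\mathrm{conv}}\,[g]$, the closed convex hull of $[g]$. Since $g$ is assumed weakly l.s.c., the epigraph $[g]$ is weakly closed; and because $\V$ is a (reflexive) Hilbert space, for convex sets the weak and norm closures coincide, so there is no ambiguity in the hull above.

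Next I would bring in the two standing hypotheses. The coercivity of $g^{**}$ guarantees that all its sublevel sets are bounded, hence---being weakly closed and convex in a reflexive space---weakly compact; this is exactly the compactness input needed to run a Krein--Milman/Milman argument in the weak topology, and it is the role played by Br\o ndsted's hypotheses. Coercivity also rules out horizontal recession directions of $[g^{**}]$, so that the only unboundedness of the epigraph is the harmless vertical (upward) one and the extreme-point analysis is not disturbed. With $[g^{**}]=\overline{\mathrm{conv}}\,[g]$ and $[g]$ weakly closed, Br\o ndsted's theorem then yields that every extreme point of $[g^{**}]$ lies in $\overline{[g]}=[g]$.

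It remains to package this conclusion into the stated epigraph inclusion. By the definition recalled above, a point $x$ is extremal for the function $g^{**}$ precisely when $(x,g^{**}(x))$ is an extreme point of $[g^{**}]$; for such $x$ the previous paragraph gives $(x,g^{**}(x))\in[g]$, i.e.\ $g(x)\le g^{**}(x)$. Since $[g]$ is an epigraph and therefore closed under increasing the second coordinate, the whole ray $\{(x,s):s\ge g^{**}(x)\}$ lies in $[g]$. Taking the union over all extremal $x$ and recalling that $[(g^{**})_{ext}]=\{(x,s):x\text{ extremal},\ s\ge g^{**}(x)\}$ gives $[(g^{**})_{ext}]\subset[g]$, as claimed.

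I expect the main obstacle to be the passage from the classical finite-dimensional, compact form of Milman's theorem to the infinite-dimensional Hilbert setting: an epigraph is never compact, so one cannot apply Krein--Milman/Milman to $[g^{**}]$ directly, and one must instead exploit the coercivity to localize to weakly compact sublevel slices while checking that genuine extreme points are neither created nor lost in the truncation. This is precisely the technical content that Br\o ndsted's note handles, so on our side the real work is verifying that his hypotheses hold here (weak closedness of $[g]$ from weak l.s.c.\ of $g$, and weak compactness of slices from coercivity of $g^{**}$) and making the dictionary between extremal points of the function and extreme points of its epigraph airtight.
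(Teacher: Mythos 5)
Your proposal is correct and follows essentially the same route as the paper: both reduce the statement to Br\o ndsted's extension of Milman's theorem applied in the weak topology of $\V$, using that weak l.s.c.\ of $g$ makes its epigraph weakly closed and that coercivity of the convex function $g^{**}$ makes its sublevel sets bounded, closed, convex, and hence weakly compact, which is exactly the inf-compactness hypothesis Br\o ndsted requires. The only cosmetic difference is that you phrase the conclusion pointwise (extreme points of $[g^{**}]$ lie in $[g]$) and then rebuild the epigraph inclusion, whereas the paper cites Br\o ndsted's statement directly in the epigraph form $[f_{ext}]\subset[g_{cl}]$ and identifies $g_{cl}=g$.
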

\begin{proof}
In the setting of \cite{brondsted1966milman} we let $E$ be the separable Hilbert space $\V$ with the weak topology. Since convex functionals are l.s.c.~with respect to the weak topology if and only if they are with respect to the norm topology it follows that the l.s.c~convex envelope of $g$ equals the weakly l.s.c.~convex envelope. In the notation of Theorem 1 of \cite{brondsted1966milman} we can then take $f=g^{**}$ and the theorem states that $[f_{ext}]\subset [g_{cl}]$ where $g_{cl}$ is the greatest l.s.c.~minorant of $g$. Since $g$ is assumed to be l.s.c.~we have $g=g_{cl}$ and the desired inclusion follows. It remains to check that the conditions of Theorem 1 are fulfilled, which is that ``$g$ is inf-compact in some direction'' (with respect to the weak topology, referring to the terminology of \cite{brondsted1966milman}). For this it suffices to check that $g^{**}$ is inf-compact i.e. that all level sets are compact. The level sets of $g^{**}$ are closed and convex and since $g^{**}$ is assumed coercive they are also bounded. It follows that such level sets are compact in the weak topology and the proof is complete. \end{proof}

Based on this we can now easily prove Theorem \ref{brondsted}.

\noindent\textbf{Proof of Theorem \ref{brondsted}}.
Since $g\geq g^{**}$, Theorem \ref{arne} clearly implies that $g(x)=g^{**}(x)$ for all extremal points $x$ for $g^{**}$. Consequently, if $g(x)= g^{**}(x)$ does not hold, then $x$ is not extremal for $g^{**}$ and the existence of $\nu$ follows by the definition of an extremal point for $g^{**}$.

Next we discuss what the theorem implies about minimizers of $g$ versus $g^{**}$. Denote by $G$ the set of global minimizers of $g$ and by $G^{**}$ the set of global minimizers of $g^{**}$.
\begin{corollary}\label{corminimizers}
Let $g$ be a weakly l.s.c.~functional on a separable Hilbert space $\V$ such that $g^{**}$ is coercive. Then $G^{**}$ is a closed bounded convex set containing $G$. Letting $G^{**}_{ext}$ denote the extremal points of $G^{**}$ we also have that $G^{**}_{ext}\subset G$. Finally the closed convex hull of $G^{**}_{ext}$ equals $G^{**}$.
\end{corollary}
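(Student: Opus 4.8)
The plan is to prove Corollary \ref{corminimizers} by assembling three facts about $G^{**}$, using Theorem \ref{brondsted} and Theorem \ref{arne} as the main engine, together with standard convexity arguments. First I would establish that $G^{**}$ is a closed, bounded, convex set. Convexity is immediate since $g^{**}$ is convex and the set of minimizers of any convex function is convex (it is a sublevel set at the minimum value). Closedness follows from the fact that $g^{**}$ is l.s.c., so $G^{**} = \{x : g^{**}(x) \leq m\}$ (where $m = \inf g^{**}$) is a level set of an l.s.c.\ function, hence closed. Boundedness is precisely where coercivity enters: since $g^{**}$ is coercive its level sets are bounded, and $G^{**}$ is the lowest such level set.

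Next I would show $G \subset G^{**}$. Here the key input is that $g \geq g^{**}$ pointwise (from $g^{**}$ being a minorant of $g$) together with the fact, noted in the proof of Theorem \ref{brondsted}, that $g$ and $g^{**}$ share the same infimum. Indeed, if $x \in G$ then $g(x) = \inf g$; one checks $\inf g = \inf g^{**}$ (the envelope cannot lower the infimum below that of $g$ since $g^{**}\le g$ gives $\inf g^{**}\le \inf g$, while the constant function equal to $\inf g$ is a convex minorant of $g$ forcing $\inf g^{**}\ge \inf g$). Thus $g^{**}(x) \leq g(x) = \inf g = \inf g^{**}$, so $x \in G^{**}$, giving $G \subset G^{**}$.

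For the inclusion $G^{**}_{ext} \subset G$, I would combine two observations. From the proof of Theorem \ref{brondsted} we already know that every extremal point of $g^{**}$ satisfies $g(x) = g^{**}(x)$. So it suffices to argue that an extremal point $x$ of the convex set $G^{**}$ is also an extremal point of the function $g^{**}$ in the sense of \cite{brondsted1966milman}, i.e.\ that $g^{**}$ is not affine on any relatively open segment through $x$. The point is that on $G^{**}$ the function $g^{**}$ is constant (equal to $\inf g^{**}$), so if $g^{**}$ were affine on an open segment $I$ through $x$, affineness combined with $g^{**}$ attaining its minimum at $x$ would force $g^{**}$ to be constant on $I$, placing all of $I$ inside $G^{**}$ and contradicting extremality of $x$ in $G^{**}$. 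Hence such $x$ is extremal for $g^{**}$, so $g(x) = g^{**}(x) = \inf g^{**} = \inf g$, which yields $x \in G$. I expect this translation between the two notions of ``extremal'' to be the main obstacle, since one must carefully rule out the degenerate affine-but-nonconstant case and handle the relatively open segments correctly.

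Finally, for the claim that the closed convex hull of $G^{**}_{ext}$ equals $G^{**}$, I would invoke the Krein--Milman theorem: $G^{**}$ is a closed bounded convex subset of the Hilbert space $\V$, hence weakly compact, and a weakly compact convex set equals the closed convex hull of its extreme points. Care is needed regarding which topology the closure is taken in, but for a convex set the norm-closure and weak-closure of the convex hull coincide, so the statement holds as written. This completes the four assertions of the corollary.
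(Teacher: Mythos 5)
Your proposal is correct and follows essentially the same route as the paper: closedness/boundedness/convexity from lower semi-continuity and coercivity, Krein--Milman via weak compactness for the convex-hull claim, and the key step $G^{**}_{ext}\subset G$ via the observation that an affine segment through a global minimizer must be a constant segment, contradicting set-extremality. The only cosmetic difference is that you phrase this last step as a direct argument through Theorem \ref{arne} (function-extremal points satisfy $g=g^{**}$), while the paper argues by contradiction through Theorem \ref{brondsted}; these are contrapositives of the same reasoning.
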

\begin{proof}
The convexity of $G^{**}$ and the inclusion $G\subset G^{**}$ are immediate. The boundedness of $G^{**}$ follows since $g^{**}$ is coercive. Let $x$ be in the closure of $G^{**}$ and let $c$ be the value of the global minimum. Then $g^{**}(x)\leq c$ follows by l.s.c.~and the reverse inequality is obvious from the fact that $c$ is a global minimum. It follows that $x\in G^{**}$ and hence $G^{**}$ is closed.

The existence of points in $G^{**}_{ext}$ and the statement concerning the closed convex hull are now immediate consequences of the Krein-Milman theorem and the fact that bounded closed convex sets are weakly compact in separable Hilbert spaces (Theorem 3.33, \cite{bauschke2011convex}). It remains to prove that $G^{**}_{ext}\subset G$. Let $x_0\in G^{**}_{ext}$ suppose $x_0\not \in G$. Then Theorem \ref{brondsted} implies the existence of a direction $\nu$ on which $g^{**}$ is constant near $x_0$ contradicting that $x_0$ is an extremal point.
\end{proof}

We end by noting that Theorem \ref{brondsted} implies that $\gamma$ tunes the maximum negative curvature in the $\Q_\gamma$-transform as discussed in the introduction.

\begin{corollary}\label{propconcave}
Let $f$ be a weakly l.s.c.~$[0,\infty]$-valued functional on a separable Hilbert space $\V$. For each $x_0\in\V$ with $f(x_0)>\Q_\gamma(f)(x_0)$ there exists a unit vector $\nu$ such that $\Q_\gamma(f)(x_0+t\nu)=a+bt-\frac{\gamma}{2}t^2$ for $t$ near 0 and some $a,b\in\R$.
\end{corollary}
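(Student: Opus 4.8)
The plan is to reduce the statement directly to Theorem \ref{brondsted} by absorbing the quadratic term into the function whose envelope is studied. Set $g=f+\frac{\gamma}{2}\|\cdot\|^2$. By Theorem \ref{t1} applied with $d=0$ we have $g^{**}(x)=\Q_\gamma(f)(x)+\frac{\gamma}{2}\|x\|^2$, and adding $\frac{\gamma}{2}\|x_0\|^2$ to both sides of the hypothesis $f(x_0)>\Q_\gamma(f)(x_0)$ shows that this is precisely the condition $g(x_0)\neq g^{**}(x_0)$. Thus the geometric fact I want, namely that $g^{**}$ is affine along some line through $x_0$, is exactly what Theorem \ref{brondsted} produces once its hypotheses are verified for $g$.

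First I would check those hypotheses. The functional $f$ is weakly l.s.c.~by assumption, and $\frac{\gamma}{2}\|\cdot\|^2$ is convex and norm-l.s.c., hence weakly l.s.c.~(Theorem 9.1 in \cite{bauschke2011convex}); their sum $g$ is therefore weakly l.s.c. Coercivity of $g^{**}=\Q_\gamma(f)+\frac{\gamma}{2}\|\cdot\|^2$ was already noted in the paragraph preceding Theorem \ref{brondsted}, since $\Q_\gamma(f)\geq 0$ by Theorem \ref{t1} and the quadratic term forces the lower level sets to be bounded. With these in hand, Theorem \ref{brondsted} applies at $x_0$ (which is consequently not extremal for $g^{**}$) and yields a unit vector $\nu$ and some $t_0>0$ such that $g^{**}(x_0+t\nu)=a'+b't$ is affine on $(-t_0,t_0)$ for some $a',b'\in\R$.

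To finish I would subtract the quadratic term along the line. Since
$$\Q_\gamma(f)(x_0+t\nu)=g^{**}(x_0+t\nu)-\frac{\gamma}{2}\|x_0+t\nu\|^2$$
and $\|x_0+t\nu\|^2=\|x_0\|^2+2t\scal{x_0,\nu}_\R+t^2$ (using $\|\nu\|=1$), the right-hand side equals $\para{a'-\frac{\gamma}{2}\|x_0\|^2}+\para{b'-\gamma\scal{x_0,\nu}_\R}t-\frac{\gamma}{2}t^2$ on $(-t_0,t_0)$, which has the claimed form $a+bt-\frac{\gamma}{2}t^2$ with $a=a'-\frac{\gamma}{2}\|x_0\|^2$ and $b=b'-\gamma\scal{x_0,\nu}_\R$.

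The entire substance of the result sits in Theorem \ref{brondsted}; granting that, the corollary is a routine reduction and I do not anticipate a genuine obstacle. The only points that deserve care are the verification that $g$ is weakly l.s.c.~and that $g^{**}$ is coercive, and the bookkeeping that the restriction of $\frac{\gamma}{2}\|\cdot\|^2$ to the line $t\mapsto x_0+t\nu$ contributes exactly the pure quadratic $\frac{\gamma}{2}t^2$ (the coefficient being pinned down by $\|\nu\|=1$) together with an affine remainder that is merged into $a+bt$. The fact that the negative curvature is exactly $-\gamma$, rather than merely nonpositive, is thus a direct consequence of $\nu$ being a \emph{unit} vector.
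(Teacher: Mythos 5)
Your proposal is correct and follows essentially the same route as the paper: set $g=f+\frac{\gamma}{2}\|\cdot\|^2$, invoke Theorem \ref{t1} to identify $g^{**}$ with $\Q_\gamma(f)+\frac{\gamma}{2}\|\cdot\|^2$ and obtain coercivity, translate the hypothesis into $g(x_0)>g^{**}(x_0)$, and apply Theorem \ref{brondsted} before subtracting the quadratic along the line. Your write-up simply spells out the weak lower semi-continuity check and the final algebra in more detail than the paper does.
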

\begin{proof}
Set $g(x)=f(x)+\frac{\gamma}{2}\|x\|^2$. By Theorem \ref{t1} we have $\Q_\gamma(f)(x)+\frac{\gamma}{2}\|x\|^2=g^{**}(x)$ by which it is immediate that $g^{**}$ is coercive (since $\Q_\gamma(f)\geq 0$ by Proposition \ref{propprop}). It also follows that $g(x_0)>g^{**}(x_0)$ and hence Theorem \ref{brondsted} implies that a unit vector $\nu$ exists such that $t\mapsto \Q_\gamma(f)(x+t\nu)$ equals an affine function minus $\frac{\gamma}{2}\|(x+t\nu\|^2$ in a neighborhood of $t=0$.
\end{proof}

\section{The Quadratic Envelope as a Regularizer}\label{Sreg}

We now let $A:\V\rightarrow\W$ be a bounded linear operator, where $\V,\W$ are possibly different (separable) Hilbert spaces, and consider functionals of the type \begin{equation}\label{t4}\J(x)=f(x)+\frac{1}{2}\fro{A x-d}_\W,\quad x\in \V,\end{equation}
Our aim is to develop strategies to deal with the general problem \eqref{t4}, in the case when $f$ is an $[0,\infty]$-valued functional such that $\Q_\gamma(f)$ is computable, and focus on computing (explicit) approximations of the l.s.c convex envelope of $\J$. The theory is split in two cases, either we approximate the convex envelope from below by a convex functional, or we approximate it from above with a non-convex functional having a number of desirable properties, most notably the fact that local minimizers do not change. More precisely, we will study the relationship between the original problem \eqref{t4} and the modified problem
\begin{equation}\label{t4mod}\J_\gamma(x)=\Q_\gamma(f)(x)+\frac{1}{2}\fro{A x-d}_\W,\quad x\in \V\end{equation}
under the assumption that $\gamma I\preccurlyeq A^*A$ or $\gamma I\succcurlyeq A^*A$ (c.f.~\eqref{case 1}-\eqref{case 2} and recall Figure \ref{fig3intro}). Note that $\gamma I\succcurlyeq A^*A$ if and only if $\gamma \geq \|A\|^2$.

\subsection{\textbf{Case $A^*A\succcurlyeq \gamma I$.}}\label{secunderestimate}
Let $f$ be a $[0,\infty]-$valued functional and $A:\V\rightarrow\W$ a bounded linear operator.  The main result of this section states that $\J_\gamma$ is a convex minorant of the l.s.c.~convex envelope $\J^{**}$.

\begin{theorem}\label{tunder}
For $\gamma>0$ such that $A^*A\succcurlyeq \gamma I$, $\J_\gamma$ is convex and $\J_\gamma\leq \J^{**}$. Moreover, if $A^*A\succ \gamma I$ then it is strongly convex, in which case it has a unique minimizer. Finally, a minimizer $\hat x$ of $\J_{\gamma}$ is a minimizer of $\J$ whenever $f(\hat x)=\Q_\gamma(f)(\hat x)$.
\end{theorem}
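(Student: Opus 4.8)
The plan is to obtain the final statement as a short sandwiching argument; notably it requires neither the convexity conclusions nor the minorization $\J_\gamma\leq\J^{**}$ established in the earlier parts, but only the pointwise inequality $\Q_\gamma(f)\leq f$ furnished by Theorem \ref{t1}.

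First I would record that this inequality immediately gives $\J_\gamma\leq\J$ pointwise: adding the common term $\frac{1}{2}\|Ax-d\|^2_\W$ to both sides of $\Q_\gamma(f)(x)\leq f(x)$ yields $\J_\gamma(x)\leq\J(x)$ for every $x\in\V$. Next I would invoke the hypothesis $f(\hat x)=\Q_\gamma(f)(\hat x)$ to observe that the two functionals agree at the candidate point, i.e.\ $\J_\gamma(\hat x)=\J(\hat x)$, again simply by adding the quadratic term.

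With these two observations in hand the conclusion follows from a single chain. For arbitrary $x\in\V$,
\[
\J(x)\geq\J_\gamma(x)\geq\J_\gamma(\hat x)=\J(\hat x),
\]
where the first inequality is the minorization $\J_\gamma\leq\J$, the second is the assumed minimality of $\hat x$ for $\J_\gamma$, and the equality is the agreement of $\J$ and $\J_\gamma$ at $\hat x$. Since $x$ was arbitrary, $\hat x$ is a minimizer of $\J$.

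I do not anticipate a genuine obstacle here: the substantive content of Theorem \ref{tunder} is the convexity (and, under $A^*A\succ\gamma I$, strong convexity) of $\J_\gamma$ together with $\J_\gamma\leq\J^{**}$, which are what guarantee that the tractable surrogate $\J_\gamma$ possesses a minimizer $\hat x$ to begin with. The present step only certifies, after the fact, that such an $\hat x$ solves the original non-convex problem once $f(\hat x)=\Q_\gamma(f)(\hat x)$ holds. The one point worth stating explicitly is that this argument uses nothing about the spectral condition $A^*A\succcurlyeq\gamma I$ and is valid for any minimizer of $\J_\gamma$; the spectral hypothesis enters only indirectly, through the earlier parts that secure existence of $\hat x$.
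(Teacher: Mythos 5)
Your sandwiching argument for the last sentence is correct and is in fact verbatim the paper's own closing step: from $\Q_\gamma(f)\leq f$ (Theorem \ref{t1}) one gets $\J_\gamma\leq\J$ with equality at $\hat x$, and then $\J(y)\geq\J_\gamma(y)\geq\J_\gamma(\hat x)=\J(\hat x)$ for all $y$. You are also right that this step uses nothing about the spectral condition $A^*A\succcurlyeq\gamma I$.

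The problem is that this is only the final sentence of the theorem. The statement you were asked to prove also asserts that $\J_\gamma$ is convex, that $\J_\gamma\leq\J^{**}$, and that under $A^*A\succ\gamma I$ it is strongly convex with a unique minimizer; your proposal explicitly sets these aside as ``the substantive content'' without proving them, so the proof is incomplete precisely where the work lies. The paper's argument for these parts reduces to $d=0$ (the cross terms in $\fro{Ax-d}$ are affine), introduces the semi-inner product $\scal{x,y}_{\U}=\scal{Ax,Ay}_{\W}-\gamma\scal{x,y}_{\V}$ (a genuine inner product when $A^*A\succ\gamma I$), and writes
\[
\Q_\gamma(f)(x)+\tfrac{1}{2}\fro{Ax}_{\W}=\Bigl(\Q_\gamma(f)(x)+\tfrac{\gamma}{2}\fro{x}_{\V}\Bigr)+\tfrac{1}{2}\fro{x}_{\U}.
\]
By Theorem \ref{t1} the first bracket is the l.s.c.\ convex envelope of $f+\tfrac{\gamma}{2}\fro{\cdot}_\V$, and $\fro{\cdot}_\U$ is convex, so $\J_\gamma$ is convex (strongly convex in the strict case, whence a unique minimizer by coercivity); being a convex l.s.c.\ minorant of $\J$, it then lies below $\J^{**}$. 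None of this is present in your proposal, so you should either supply it or make explicit that you are only proving the final claim.
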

\begin{proof}
Upon expanding $\fro{Ax-d}=\|Ax\|^2-2\scal{Ax,d}+\fro{d}$ and noting that the latter two terms are affine linear, it is easily seen that it suffices to prove the first part of the statement for $d=0$. That $\J_\gamma$ is l.s.c. and that $\J_\gamma\leq \J$ follows immediately by Theorem \ref{t1} and thus $\J_\gamma\leq \J^{**}$ follows immediately upon showing that $\J_\gamma$ is convex. Define $\scal{x,y}_{\U}=\scal{Ax,Ay}_{\W}-\gamma\scal{x,y}_{\V}$ and note that this is a semi-inner product, as long as $A^*A\succcurlyeq \gamma I$, which is an inner product if the inequality is strict. In either case $\fro{x}_{\U}:=\scal{x,x}_{\U}$ is convex. It follows that $$\Q_\gamma (f)(x)+\frac{1}{2}\fro{Ax}_{\W}=\Big(\Q_\gamma (f)(x)+\frac{\gamma}{2}\fro{x}_{\V}\Big)+\frac{1}{2}\fro{x}_{\U}$$ which by Theorem \ref{t1} implies that $\J_\gamma$ equals the l.s.c. convex envelope of $f(x)+\frac{\gamma}{2}\fro{x}_{\V}$ plus the term $\frac{1}{2}\fro{x}_{\U}$. We conclude that $\J_\gamma$ is a convex functional which is strongly convex when $A^*A\succ \gamma I$. In the latter case the existence of a unique minimizer follows by Corollary 11.15 in \cite{bauschke2011convex} (supercoercivity of $\J_\gamma$ is obvious by the term $\frac{1}{2}\fro{x}_{\U}$). Finally let $d$ be fixed and let $\hat x$ be a minimizer of $\J_\gamma$. Suppose that $f(\hat x)=\Q_\gamma(f)(\hat x)$ and let $y\in\V$ be arbitrary. Then $\J(y)\geq \J_\gamma(y)\geq \J_\gamma(\hat x)=\J(\hat x)$ showing that $\hat x$ is a global minimizer of $\J$.
\end{proof}

\subsection{\textbf{Case $A^*A\preccurlyeq \gamma I$.}}\label{secoverestimate}

Let $f$ be a $[0,\infty]-$valued functional and $A:\V\rightarrow\W$ a bounded linear operator. Again we are interested in the relationship between $\J$ and $\J_\gamma$ defined in \eqref{t4} and \eqref{t4mod} respectively. The main result of this section is that $\J_\gamma$ does not move minima for $\gamma$ in the stated range, but we begin by noting the following inequalities, the first one being reverse of the one proved in Theorem \ref{tunder}.

\begin{proposition}\label{pover}
For $\gamma$ such that $\|A\|^2\leq \gamma$ we have $\J^{**}\leq \J_\gamma\leq \J.$
\end{proposition}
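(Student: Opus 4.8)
The right-hand inequality $\J_\gamma\le\J$ is the easy half: Theorem \ref{t1} gives $\Q_\gamma(f)\le f$ pointwise, so adding the common term $\frac12\fro{Ax-d}_\W$ yields $\J_\gamma\le\J$ at once. The substance is therefore the left-hand inequality $\J^{**}\le\J_\gamma$, and the plan is to reduce it to the convex-envelope identity already recorded in Theorem \ref{t1}. The conceptual point I would stress at the outset is that one must \emph{not} try to exhibit $\J_\gamma$ as a convex minorant of $\J$: in the regime $\|A\|^2\le\gamma$ the function $\J_\gamma$ is typically not convex, and in any event a convex minorant would only deliver $\J_\gamma\le\J^{**}$, the wrong direction. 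Instead I would peel off the offending concave part.

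Concretely, set $g(x)=f(x)+\frac{\gamma}{2}\fro{x}_\V$ and $r(x)=\frac12\fro{Ax-d}_\W-\frac{\gamma}{2}\fro{x}_\V$, so that $\J=g+r$ and, by the $d=0$ instance of Theorem \ref{t1}, $\J_\gamma=g^{**}+r$. The crucial observation is that $r$ is concave: its quadratic part is $\frac12\scal{(A^*A-\gamma I)x,x}$, and the hypothesis $\|A\|^2\le\gamma$ says precisely that $A^*A-\gamma I\preccurlyeq 0$, while its remaining part $-\scal{Ax,d}+\frac12\fro{d}_\W$ is affine; hence $-r$ is a continuous convex function. Verifying this sign bookkeeping is really the heart of the matter, since it is exactly where the standing assumption $\|A\|^2\le\gamma$ enters.

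With the decomposition in hand the argument is short. The envelope $\J^{**}$ is convex, l.s.c., and satisfies $\J^{**}\le\J=g+r$. Subtracting the continuous function $r$, the function $\J^{**}-r$ is again l.s.c.\ (l.s.c.\ minus continuous), it is convex as the sum of the convex functions $\J^{**}$ and $-r$, and it obeys $\J^{**}-r\le g$ (there is no $\infty-\infty$ difficulty because $r$ is finite-valued). Since $g^{**}$ is by Fenchel--Moreau the largest l.s.c.\ convex minorant of $g$, I conclude $\J^{**}-r\le g^{**}$, and adding $r$ back gives $\J^{**}\le g^{**}+r=\J_\gamma$, completing the proof. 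The only genuine obstacle is thus the conceptual one of recognizing that the right move is to subtract $r$ and invoke maximality of $g^{**}$, rather than to manipulate the non-convex $\J_\gamma$ directly; everything after that is a one-line application of Theorem \ref{t1}.
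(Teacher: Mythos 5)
Your proof is correct and follows essentially the same route as the paper: after disposing of $\J_\gamma\le\J$ via $\Q_\gamma(f)\le f$, you subtract the concave quadratic $r(x)=\frac12\fro{Ax-d}_\W-\frac{\gamma}{2}\fro{x}_\V$ from $\J^{**}$ and invoke maximality of the l.s.c.\ convex minorant $\big(f+\frac{\gamma}{2}\fro{\cdot}\big)^{**}$, which is exactly the paper's argument with $h(x)=\J^{**}(x)-\frac12\fro{Ax}$ (the paper merely reduces to $d=0$ first, whereas you absorb the affine part into $r$). The sign bookkeeping where $\|A\|^2\le\gamma$ enters is handled correctly.
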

\begin{proof}
The right inequality is immediate since $\Q_\gamma(f)\leq f$ by Theorem \ref{t1}. As in Theorem \ref{tunder} we moreover see that it suffices to prove the left inequality for $d=0$. To this end set $h(x)=\J^{**}(x)-\frac{1}{2}\|Ax\|^2$. Since $\J^{**}\leq f+\frac{1}{2}\fro{Ax}$ we have $h\leq f$ and moreover $$h(x)+\frac{\gamma}{2}\fro{x}=\J^{**}+\para{\frac{\gamma}{2}\fro{x}-\frac{1}{2}\fro{Ax}}.$$ The right hand side is convex and l.s.c.~by which we conclude that $$h(x)+\frac{\gamma}{2}\fro{x}\leq \big(f+\frac{\gamma}{2}\fro{\cdot}\big)^{**}(x)=\Q_\gamma(f)(x)+\frac{\gamma}{2}\fro{x}$$
(the last identity follows by Theorem \ref{t1}) which gives $h(x)\leq \Q_\gamma(f)(x)$. In other words $\J^{**}(x)\leq \Q_\gamma(f)(x)+\frac{1}{2}\|Ax\|^2$ which is the desired inequality (for $d=0$).
\end{proof}

We now come to the main theorem of this section, inspired by Theorems 4.5 and 4.8 in \cite{soubies2015continuous}.  We say that $x$ is a \textit{local minimizer} of $\J$ if there exists a neighborhood $U$ of $x$ in $\V$ such that $\J(y)\geq \J(x)$ for all $y\in U$ and we say that $x$ is a \textit{strict local minimizer} of $\J$ if the inequality is strict for $y\neq x$.

\begin{theorem}\label{tover1}
Suppose that $\|A\|^2< \gamma $. If $x$ is a local minimizer (resp. strict local minimizer) of $\J_\gamma$ then it is also a local minimizer (resp. strict local minimizer) of $\J$, and $\J_\gamma(x)=\J(x)$. In addition the global minimizers coincide.
\end{theorem}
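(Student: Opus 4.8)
The plan is to reduce the entire statement to a single key fact: \emph{at any local minimizer $x$ of $\J_\gamma$ the quadratic envelope must touch $f$}, i.e. $\Q_\gamma(f)(x)=f(x)$, and hence $\J_\gamma(x)=\J(x)$. I would establish this by contradiction. Suppose $x$ is a local minimizer of $\J_\gamma$ but $\Q_\gamma(f)(x)<f(x)$. Then Corollary \ref{propconcave} supplies a unit vector $\nu$ and constants $a,b\in\R$ with $\Q_\gamma(f)(x+t\nu)=a+bt-\frac{\gamma}{2}t^2$ for $t$ near $0$. Plugging this into $\J_\gamma$ and expanding $\frac{1}{2}\fro{A(x+t\nu)-d}=\fro{Ax-d}/2+t\,\Re\scal{Ax-d,A\nu}+\frac{t^2}{2}\fro{A\nu}$, the coefficient of $t^2$ in $t\mapsto\J_\gamma(x+t\nu)$ is $\frac{1}{2}(\fro{A\nu}-\gamma)$. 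Since $\nu$ is a unit vector, $\fro{A\nu}\leq\|A\|^2<\gamma$, so this coefficient is strictly negative: the restriction of $\J_\gamma$ to this line is a strictly concave parabola near $t=0$, so $t=0$ is not a local minimum, contradicting that $x$ minimizes $\J_\gamma$ locally. The strict inequality $\|A\|^2<\gamma$ is used precisely here, and this is the heart of the argument.

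Granting the touching identity, the local-minimizer claim is cheap via $\J_\gamma\leq\J$ (Theorem \ref{t1}). On a neighborhood $U$ of $x$ with $\J_\gamma(y)\geq\J_\gamma(x)$ for $y\in U$ one gets $\J(y)\geq\J_\gamma(y)\geq\J_\gamma(x)=\J(x)$, so $x$ is a local minimizer of $\J$; if the middle inequality is strict for $y\neq x$ (the strict case), the same chain gives a strict local minimizer. The equality $\J_\gamma(x)=\J(x)$ asserted in the theorem is exactly the touching condition just proved.

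For the global statement I would first pin the common infimum. Proposition \ref{pover} gives $\J^{**}\leq\J_\gamma\leq\J$, and the l.s.c.\ convex envelope satisfies $\inf\J^{**}=\inf\J$ (the constant $\inf\J$ is an l.s.c.\ convex minorant of $\J$, forcing $\inf\J\leq\J^{**}$, while $\J^{**}\leq\J$ gives the reverse). Squeezing yields $\inf\J_\gamma=\inf\J=:m$. Now if $x$ globally minimizes $\J_\gamma$ then $\J_\gamma(x)=m$, and being in particular a local minimizer the touching identity gives $\J(x)=\J_\gamma(x)=m=\inf\J$, so $x$ minimizes $\J$. Conversely if $x$ minimizes $\J$ then $m=\J(x)\geq\J_\gamma(x)\geq\inf\J_\gamma=m$ forces $\J_\gamma(x)=m$, so $x$ minimizes $\J_\gamma$. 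Hence the two minimizer sets coincide (and are simultaneously empty or nonempty).

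I expect the only genuine obstacle to be the curvature computation in the first paragraph: correctly recognizing that the negative curvature $-\gamma$ guaranteed by Corollary \ref{propconcave} strictly dominates the at-most-$\|A\|^2$ curvature injected by the data-fidelity term in every direction. Once that strict domination is in hand, everything downstream is bookkeeping with the inequalities $\J^{**}\leq\J_\gamma\leq\J$ and the squeeze on the infima.
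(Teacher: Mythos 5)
Your proposal is correct and follows essentially the same route as the paper: the core step is identical (at a local minimizer of $\J_\gamma$ where $\Q_\gamma(f)(x)<f(x)$, Corollary \ref{propconcave} forces a direction of strictly negative curvature $\|A\nu\|^2-\gamma<0$, a contradiction), and the local and global claims then follow from $\J^{**}\leq\J_\gamma\leq\J$ exactly as in the paper's argument. Your handling of the global part via the squeeze $\inf\J^{**}=\inf\J_\gamma=\inf\J$ is only a cosmetic reorganization of the paper's reasoning.
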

\begin{proof}
Let $x$ be a local minimizer of $\J_\gamma$. If $\Q_\gamma(f)(x)=f(x)$ does not hold then Corollary \ref{propconcave} implies that there exists a unit vector $\nu$ such that \begin{equation}\label{brysel}\frac{d^2}{dt^2}\J_\gamma(x+t\nu)(0)=\frac{d^2}{dt^2}\para{\Q_\gamma(f)(x+t\nu)+\frac{1}{2}\fro{A(x+t\nu)-d}_{\V}}(0)=\|A\nu\|^2-\gamma<0.\end{equation}
We thus conclude that $\Q_\gamma(f)(x)=f(x)$ holds which immediately gives that $\J_\gamma(x)=\J(x)$. In view of Proposition \ref{pover} it follows that $x$ is a local minimizer also for $\J$. The same argument applies to strict local minimizers.

We now prove that the global minimizers coincide. Note that global minimizers of $\J$ are global minimizers of $\J_\gamma$ in view of Proposition \ref{pover} and the fact that $\J(x)=\J^{**}(x)$ for all global minimizers $x$. From this we also see that the global minimum of $\J$ and $\J_\gamma$ coincide, let us denote this value by $c$. Conversely suppose that $x$ is a global minimizer of $\J_\gamma$ (i.e.~$\J_\gamma(x)=c$). Then it is a local minimizer of $\J$ by the first part, which automatically is global for $\J$ since we otherwise would have $\J(y)<c$ for some other value $y$. The proof is complete.
\end{proof}

The situation when $\gamma=\|A\|^2$ is a bit more involved so we content ourselves with the following statement concerning the global minimizers.

\begin{theorem}\label{tover2}
Set $\gamma=\|A\|^2$, let $G$ be the global minimizers of $\J$ and $G_\gamma$ the global minimizers of $\J_\gamma$. Then $G\subset G_\gamma$ and each connected component of $G_\gamma$ contains points of $G$.
\end{theorem}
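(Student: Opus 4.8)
The plan is to prove the two inclusions separately, the first being a direct consequence of the sandwiching inequality of Proposition~\ref{pover} and the second resting on the negative-curvature direction supplied by Corollary~\ref{propconcave}. For the inclusion $G\subset G_\gamma$ I would argue exactly as in Theorem~\ref{tover1}: since $\inf \J=\inf \J^{**}$ and $\J^{**}\leq \J$, any $x\in G$ satisfies $\J^{**}(x)=\inf\J=:c$, so Proposition~\ref{pover} squeezes $c\leq \J_\gamma(x)\leq \J(x)=c$, giving $\J_\gamma(x)=c$. As $\J_\gamma\geq \J^{**}\geq c$ everywhere, this shows $\inf\J_\gamma=c$ is attained at $x$, whence $G\subset G_\gamma$ and $\min\J_\gamma=\min\J=c$. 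Recall also that $G_\gamma$ is closed, since $\J_\gamma$ is l.s.c.~by Proposition~\ref{propprop}, so its connected components are closed.

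For the harder inclusion I would first reduce it to a pointwise statement: it suffices to exhibit, in each connected component $C$ of $G_\gamma$, a single point $x$ with $\Q_\gamma(f)(x)=f(x)$. Indeed, at such a point Proposition~\ref{pover} collapses to $\J_\gamma(x)=\J(x)$, and since $x\in G_\gamma$ forces $\J(x)=c=\min\J$, we conclude $x\in G\cap C$.

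To locate such a point I would study $\phi(x):=\tfrac12\fro{Ax-d}_\W$ restricted to $C$, using two structural facts. First, writing $\J_\gamma(x)=\para{\Q_\gamma(f)(x)+\tfrac{\gamma}{2}\fro{x}}+\para{\tfrac12\fro{Ax}-\tfrac{\gamma}2\fro{x}}$, the second bracket has Hessian $A^*A-\gamma I\preccurlyeq 0$ and restricts to an affine function of $t$ along any unit vector $\nu$ in $\mathcal N:=\ker(\gamma I-A^*A)$ (the directions attaining $\fro{A\nu}=\gamma$); hence $t\mapsto\J_\gamma(x+t\nu)$ is convex for $\nu\in\mathcal N$. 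Second, if $x\in G_\gamma$ has $\Q_\gamma(f)(x)<f(x)$, then Corollary~\ref{propconcave} yields a unit vector $\nu$ with $\Q_\gamma(f)(x+t\nu)=a+bt-\tfrac\gamma2 t^2$ near $t=0$, so that $\J_\gamma(x+t\nu)=\J_\gamma(x)+t\para{b+\scal{A\nu,Ax-d}}+\tfrac12\para{\fro{A\nu}-\gamma}t^2$; as $x$ is a (local) minimizer and $\fro{A\nu}-\gamma\leq 0$, both the linear and quadratic coefficients must vanish, so $\nu\in\mathcal N$ and $\J_\gamma\equiv c$ on a segment through $x$ in the direction $\nu$. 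On that flat segment $\phi(x+t\nu)=c-\Q_\gamma(f)(x+t\nu)$ is a strictly upward parabola in $t$. Combining these, suppose $\phi$ attains its supremum over $C$ at some $x^\ast$. If $\Q_\gamma(f)(x^\ast)<f(x^\ast)$, the above produces a flat direction $\nu\in\mathcal N$ along which $x^\ast\pm t\nu$ stays in $G_\gamma$, hence in $C$ by connectedness, while $\phi$ strictly increases in at least one of the two directions, contradicting maximality. Therefore $\Q_\gamma(f)(x^\ast)=f(x^\ast)$ and $x^\ast\in G\cap C$.

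The main obstacle is precisely the attainment of $\sup_C\phi$, which is delicate because $C$ need not be bounded. Here I would use that $\phi$ is a convex minorant of $\J$ (as $f\geq 0$), so $\phi=\phi^{**}\leq\J^{**}$ and thus $\phi\leq c$ on $G^{**}\supset C$; moreover, for a recession direction $r$ of the closed convex set $G^{**}$ the boundedness of $s\mapsto\phi(x+sr)\leq c$ forces $Ar=0$, along which $\phi$ is constant. Consequently the unboundedness of $C$ occurs only inside $\ker A$, on which $\phi$ is insensitive, so maximizing $\phi$ reduces to a bounded, hence weakly compact, slice. Passing to a weak limit of a maximizing sequence, and invoking the extreme-point machinery behind Corollary~\ref{corminimizers} (Br\o ndsted--Milman) to realize the maximizer at a genuine extreme point of $G^{**}$, should secure attainment and complete the proof. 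I expect this compactness step, rather than the curvature computation, to be the technical heart of the argument.
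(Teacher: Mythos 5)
Your first part ($G\subset G_\gamma$ and equality of the minimal values) is correct and matches the paper, and so is your curvature computation: at any $x\in G_\gamma$ with $\Q_\gamma(f)(x)<f(x)$, Corollary \ref{propconcave} together with global minimality forces both the linear coefficient and $\|A\nu\|^2-\gamma$ to vanish, so $\nu$ lies in $\ker(\gamma I-A^*A)=\ker\|\cdot\|_{\U}$ and $\J_\gamma\equiv c$ on a segment through $x$, along which $\frac{1}{2}\fro{A\cdot-d}_\W=c-\Q_\gamma(f)$ is a strictly convex parabola. This is essentially the paper's first step. The divergence is in how the flat segment is exploited: the paper intersects $G_\gamma$ with the affine plane $x+\ker\|\cdot\|_{\U}$, on which $\J_\gamma$ is convex and coercive (the $\U$-seminorm term is constant there), so that $S=(x+\ker\|\cdot\|_{\U})\cap G_\gamma$ is a bounded, closed, convex subset of the component; Krein--Milman supplies extreme points of $S$, and the flat segment argument shows every extreme point of $S$ lies in $G$. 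You instead propose to maximize $\phi=\frac12\fro{A\cdot-d}_\W$ over the whole connected component $C$ and run the contradiction at a maximizer.

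The contradiction itself is sound, but the attainment of $\sup_C\phi$ --- which you correctly identify as the technical heart --- is not established, and the sketch you give does not close it. $C$ is neither convex nor weakly closed, so recession directions of the convex set $G^{**}$ do not control its geometry, and a weak limit of a maximizing sequence need not lie in $C$; moreover $\phi$ is weakly \emph{lower}, not upper, semicontinuous, so even on a weakly compact set the weak limit of a maximizing sequence can fail to be a maximizer. The problem persists in finite dimensions: a maximizing sequence $x_n\in C$ can drift to infinity along $\ker A$ while $Ax_n$ converges, without the limiting value of $\phi$ being realized by any point of $C$. The repair is essentially to adopt the paper's slice: since every flat direction lies in $\ker\|\cdot\|_{\U}$, it suffices to work inside $S=(x_0+\ker\|\cdot\|_{\U})\cap G_\gamma$, which is convex, closed, bounded, and contained in $C$ (being connected and containing $x_0$). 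There your maximization argument does go through in finite dimensions by compactness; in the general Hilbert space setting one should abandon the maximization and instead run your perturbation at an arbitrary extreme point of the weakly compact convex set $S$, whose existence Krein--Milman guarantees --- which is precisely the paper's proof.
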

\begin{proof}
The statement $G\subset G_\gamma$ follows as in the above proof, as well as the fact that the global minimum of $\J$ and $\J_\gamma$ coincide; we denote it by $c$. If $x\in G_\gamma$ and $\J(x)> c$ then it follows by \eqref{brysel} that there exists a unit vector $\nu$ such that $\frac{d^2}{dt^2}\J_\gamma(x+t\nu)\leq 0$ in a neighborhood of $t=0$. Strict inequality contradicts the assumption of global minima, so we deduce that $\gamma\|\nu\|^2=\|A\nu\|^2$. Introducing the semi-norm $\|x\|_{\U}^2=\gamma\|x\|_\V^2-\|Ax\|_\W^2$, this means that $\|\nu\|_{\U}=0$, i.e. that $\nu$ lies in the kernel of the semi-norm $\|\cdot\|_{\U}$ (which is a linear subspace by convexity of the semi-norm). Let $P$ be the affine hyperplane $P=x+\ker \|\cdot\|_{\U}$ and set $S=P\cap G_\gamma.$ For $y\in \ker \|\cdot\|_{\U}$ we have \begin{equation}\label{t6}\J_\gamma(x+y)=\para{\Q_{\gamma}(f)(x+y)+\frac{\gamma}{2}\fro{x+y}_{\V}}-\frac{1}{2}\fro{x}_{\U}- \scal{A(x+y),d}_\W+\frac{1}{2}\|d\|^2_\W,\end{equation} so Theorem \ref{t1} implies that $\J_\gamma$ is convex on $P$. In particular $S$ is convex. Since $\J_\gamma$ is l.s.c.~it is also closed. Moreover $S$ is bounded due to the quadratic term $\fro{x+y}_{\V}$ in \eqref{t6}. $S$ is therefore weakly closed and hence it equals the closed convex hull of its extremal points by the Krein-Milman theorem. If $x$ now is one of these extremal points then we can argue as in the beginning of this proof and conclude that $\J_\gamma(x)=\J(x)$, since the existence of a $\nu$ with the properties stated initially would contradict that $x$ is an extremal point of $S$.
\end{proof}

\section{The $\S$-Transform and Semi-Algebraicity}\label{semi}
We briefly treat semi-algebraicity of $\Q_\gamma(f)$ since it was shown in \cite{attouch2013convergence} that this is a necessary condition for the forward backward splitting method to converge in the non-convex setting. We remind the reader that a function on a finite dimensional space is semi-algebraic if its graph is a semi-algebraic set \cite{bochnak2013real}.
\begin{theorem}\label{tf}
If $\V$ is finite dimensional and $f$ is semi-algebraic then so is $\S_\gamma(f)$ and $\Q_\gamma(f)$.
\end{theorem}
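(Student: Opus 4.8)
The plan is to reduce the whole statement to the Tarski--Seidenberg theorem (equivalently, quantifier elimination for the first-order theory of real closed fields), which guarantees that a subset of $\R^N$ defined by a first-order formula whose atoms are polynomial (in)equalities with real coefficients is semi-algebraic. Since $\Q_\gamma=\S_\gamma^2$ by Proposition \ref{propQ}, it suffices to prove the single assertion that $\S_\gamma$ sends semi-algebraic functions to semi-algebraic functions; applying this once yields the claim for $\S_\gamma(f)$ and applying it twice yields it for $\Q_\gamma(f)=\S_\gamma(\S_\gamma(f))$. Throughout I call a $[-\infty,\infty]$-valued function $g$ semi-algebraic when its finite graph $\{(x,t)\in\V\times\R:~g(x)=t\}$ is a semi-algebraic subset of $\R^{n+1}$, which in particular forces $\mathsf{dom}(g)$ to be semi-algebraic as a coordinate projection.

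The heart of the argument is to describe the epigraph of $\S_\gamma(g)$ without an explicit supremum. Because $s\geq\sup_x\big(-g(x)-\frac{\gamma}{2}\|x-y\|^2\big)$ holds precisely when $s$ is an upper bound of the relevant family, I would write
$$\{(y,s):~s\geq \S_\gamma(g)(y)\}=\Big\{(y,s):~\forall x\,\forall t~\big((x,t)\in\mathrm{graph}(g)\Rightarrow -t-\tfrac{\gamma}{2}\|x-y\|^2\leq s\big)\Big\}.$$
The matrix of this formula is a boolean combination of polynomial inequalities together with the semi-algebraic membership condition $(x,t)\in\mathrm{graph}(g)$, so by Tarski--Seidenberg the universally quantified set on the right is semi-algebraic. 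A pleasant feature of this description is that it is insensitive to extended-real values: it returns the half-line $\{s\geq\S_\gamma(g)(y)\}$ where the supremum is finite and the empty slice where it equals $+\infty$, the latter being exactly the behaviour one must allow for $\Q_\gamma(f)=\S_\gamma^2(f)$, which Proposition \ref{propprop} only guarantees to be $[0,\infty]$-valued.

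Finally I would pass from the epigraph back to the graph, which is once more first-order definable over a semi-algebraic set,
$$\mathrm{graph}(\S_\gamma(g))=\big\{(y,s):~s\geq\S_\gamma(g)(y)~\wedge~\forall s'\,(s'<s\Rightarrow s'<\S_\gamma(g)(y))\big\},$$
where each occurrence of ``$s\geq\S_\gamma(g)(y)$'' (and its negation) is replaced by the semi-algebraic formula above; Tarski--Seidenberg then shows $\mathrm{graph}(\S_\gamma(g))$ is semi-algebraic, i.e.~that $\S_\gamma(g)$ is semi-algebraic. Specializing to $g=f$ and then to $g=\S_\gamma(f)$ completes the proof. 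I do not anticipate a genuine obstacle here; the only real care needed is the extended-real bookkeeping---allowing $\mathsf{dom}(f)$ and $\mathsf{dom}(\Q_\gamma(f))$ to be proper semi-algebraic sets and checking that the epigraph formula degrades gracefully where the supremum is $+\infty$---while the continuity and finiteness of $\S_\gamma(f)$ from Proposition \ref{propprop} ensure nothing pathological happens at the intermediate step. The parameter $\gamma$ is harmless, since semi-algebraic sets are permitted arbitrary real coefficients.
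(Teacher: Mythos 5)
Your proof is correct and follows essentially the same route as the paper: both reduce the theorem to the factorization $\Q_\gamma=\S_\gamma\circ\S_\gamma$ from Proposition \ref{propQ} together with the fact that a supremum of a semi-algebraically parametrized family of polynomial-plus-semi-algebraic expressions is again semi-algebraic, which is ultimately an application of the Tarski--Seidenberg theorem. The only difference is one of packaging: the paper rewrites $\S_\gamma(f)$ via \eqref{defS} as the Legendre transform of $f+\frac{1}{2}\fro{\cdot}$ shifted by a quadratic and cites the argument following Theorem 2.2 in \cite{attouch2013convergence} for semi-algebraicity of that conjugate, whereas you carry out the underlying quantifier-elimination argument on the epigraph and graph explicitly (including the correct bookkeeping for $+\infty$ values), which makes your version self-contained.
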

\begin{proof}
We assume for simplicity that $\gamma=1$. It is a consequence of the Tarski-Seidenberg theorem that the set of semi-algebraic functions is closed under addition (see e.g.~Prop.~2.2.6 in \cite{bochnak2013real}) and similarly one can prove that the epigraph of a semi-algebraic function is a semi-algebraic set. If $f$ is semi-algebraic on $\R^n$ it follows that $g(x,y)=\scal{x,y}-(f(x)+\frac{1}{2}\fro{x})$ is semi-algebraic on $\R^{2n}$ and by the argument following Theorem 2.2 in \cite{attouch2013convergence} it follows that the Legendre transform of $f+\frac{1}{2}\fro{x}$ is semi-algebraic. The first result now follows since this function minus $\frac{\gamma}{2}\fro{y}$ equals $\S_\gamma(f)(y)$ by \eqref{defS}, and the second is immediate by Proposition \ref{propQ}.
\end{proof}

\section{Related Works}\label{sec_rel}

The operations $\S_{\gamma}(f)$ and $\Q_\gamma(f)$ were introduced around 1970 in greater generality by J-J.~Moreau \cite{moreau1970inf} and (seemingly independently) E-A.~Weiss \cite{weiss1969konjugierte}, and were further studied around 1990 by R. Poliquin \cite{poliquin1990subgradient} with a focus on smoothness properties. Variations of Propositions \ref{propQ} and \ref{propprop} date back to these early articles, and are also found e.g.~in Rockafellar-Wets \cite{rockafellar2009variational} Section 11.L. The transforms $\S_\gamma$ and $\Q_{\gamma}$ go under names like ``$\Phi$-conjugate''/``proximal transform'' and ``$\Phi$-biconjucate''/``$\Phi$-convex envelope'', and arise by the concrete choice $\Phi(x,y)=q_{\gamma}(x,y)=-\frac{\gamma}{2}\|x-y\|^2$. Following Rockafellar-Wets \cite{rockafellar2009variational} $\Q_\gamma(f)$ should be called ``proximal hull'' or ``$q_\gamma-$envelope''. We believe that the ``quadratic envelope'', which is closer to the latter, is more suggestive. Functions that satisfy $\Q_\gamma(f)=f$ has been called e.g.~$\gamma^{-1}$-proximal or quadratically convex.

However they are called, it seems that the connection with convex envelopes a l\'{a} Theorem \ref{t1} has not been investigated, which is the main novelty of this publication along with the structural result Corollary \ref{propconcave} and its applications to regularization in Section \ref{Sreg}. Apart from the already mentioned works by Aubert, Blanc-Feraud, Soubies and Larsson, Olsson we have not found any similar result in the literature. The fairly recent survey paper \cite{lucet2010shape} is about the closely related concept of computing Fenchel conjugates, and also mentions proximal hulls, yet it has no overlap with the present paper despite citing 262 other papers. It primarily deals with numeric computation of convex envelopes in cases when symbolic formulas are not available, and as such it is an interesting alternative to the methods developed here. The same goes for the papers \cite{mccormick1976computability} and \cite{borwein2009symbolic}. The importance of computing convex envelopes is stressed in \cite{meyer2005convex} where techniques for computing convex envelopes of so called ``convex polyhedral'' functions are developed. Convex approximations from below are considered in \cite{brighi1994approximated} which should be compared with the results in Section \ref{secunderestimate}. An alternative to approximating the convex envelope is to numerically try to compute the proximal operator of the original functional directly, which is pursued in \cite{hare2009computing}. The papers \cite{attouch1993approximation,stromberg1996regularization} deal with Lasry-Lions approximants in Hilbert space but do not make the connection with the convex envelopes. For parameters $s<t$ the Lasry-Lions approximation of $f$ \cite{lasry1986remark} is defined by  \begin{equation}\label{lasry}\begin{aligned}&\S_{1/s}\S_{1/t}(f)(x)=-\para{\inf_y-\para{\inf_w f(w)+\frac{1}{2t}\fro{w-y}}+\frac{1}{2s}\fro{x-y}}=\\& =\sup_y\para{\inf_w f(w)+\frac{1}{2t}\fro{w-y}}-\frac{1}{2s}\fro{x-y} \end{aligned}\end{equation}
which for $s=t$ gives $\Q_{s^{-1}}$. This regularization is also studied in Section 6 of the more recent publication \cite{stromberg1996regularization} (with the notation $C(1)f$), mainly with focus on differentiability-results. It is also closely connected to the more general ``proximal average'', see e.g.~\cite{bauschke2008proximal,hare2009proximal}. However the proximal average has been used mainly for modification of convex functions whereas $\Q_\gamma(f)=f$ for any l.s.c.~convex function.

\section{Conclusions}

We have provided theory for computing l.s.c.~convex envelopes of certain functionals and shown a connection with quadratic envelops (a.k.a.~proximal hulls), which was then used to regularize more intricate problems. We showed that for sufficiently small values of the parameter $\gamma$, this yields convex functionals below the original functional, which coincide with the original functional on a large part of the underlying Hilbert space. For $\gamma$ sufficiently large on the other hand we lose convexity but gain the desirable feature that the modified functional has the same global minimizers as the original one, and fewer local ones. This in turn was based on results regarding the structure of l.s.c.~convex envelopes. The results are inspired from prior work by Carl Olsson and Viktor Larsson as well as Emmanuel Soubies, Laure Blanc-F\'{e}raud and Gilles Aubert.

Particular cases of these ideas have already been applied to compressed sensing, imaging, signal processing and frequency estimation. Currently we are working on more concrete results regarding low rank approximation, improvements of frequency estimation techniques, as well as an application to the classical phase retrieval problem. We hope that other researchers will try these methods on their problems and find that the method is a valuable tool. To aid with this task an expanded version of this article is available on arXiv \cite{carlsson2016arxiv} with many more examples and useful details.

\bibliographystyle{plain}

\bibliography{referenserLL}

\end{document}